\numberwithin{equation}{section} 
\newenvironment{pdeq}{ \left\{ \begin{aligned}}{\end{aligned}\right.}
\newcommand{\np}[1]{(#1)}
\newcommand{\nb}[1]{[#1]}
\newcommand{\bp}[1]{\big(#1\big)}
\newcommand{\bb}[1]{\big[#1\big]}
\newcommand{\Bp}[1]{\bigg(#1\bigg)}
\newcommand{\Bb}[1]{\bigg[#1\bigg]}
\newcommand{\Bbp}[1]{\bigg[#1\bigg)}
\newcommand{\calg}{{\mathcal G}}
\newcommand{\calj}{{\mathcal J}}
\newcommand{\calk}{{\mathcal K}}
\newcommand{\calp}{{\mathcal P}}
\newcommand{\calt}{{\mathcal T}}
\newcommand{\R}{\mathbb{R}}
\newcommand{\Z}{\mathbb{Z}}
\newcommand{\CNumbers}{\mathbb{C}}
\newcommand{\N}{\mathbb{N}}
\DeclareMathOperator{\e}{e}
\DeclareMathOperator{\id}{Id}
\DeclareMathOperator{\Div}{div}
\DeclareMathOperator{\supp}{supp}
\DeclareMathOperator{\realpart}{Re}
\newcommand{\Ra}{\Rightarrow}
\newcommand{\ra}{\rightarrow}
\newcommand{\set}[1]{\ensuremath{\{#1\}}}
\newcommand{\setc}[2]{\ensuremath{\{#1\ \lvert\ #2\}}}
\newcommand{\setcl}[2]{\ensuremath{\bigl\{#1\ \lvert\ #2\bigr\}}}
\newcommand{\closure}[2]{\overline{#1}^{#2}}
\newcommand{\seqK}[1]{\ensuremath{\set{#1}_{k=1}^\infty}}
\newcommand{\proj}{\calp}
\newcommand{\projsymbol}{{\delta_{\Z}}}
\newcommand{\projcompl}{\calp_\bot}
\newcommand{\hproj}{\calp_H}
\newcommand{\quotientmap}{\pi}
\newcommand{\grp}{G}
\newcommand{\dualgrp}{\widehat{G}}
\newcommand{\Torus}{\R/\per\Z}
\newcommand{\idmatrix}{I}
\newcommand{\B}{B}
\newcommand{\RthreeR}{\R\times\R^3}
\newcommand{\grad}{\nabla}
\newcommand{\dx}{{\mathrm d}x}
\newcommand{\dr}{{\mathrm d}r}
\newcommand{\dtau}{{\mathrm d}\tau}
\newcommand{\dg}{{\mathrm d}g}
\newcommand{\ds}{{\mathrm d}s}
\newcommand{\dt}{{\mathrm d}t}
\newcommand{\dy}{{\mathrm d}y}
\newcommand{\dS}{{\mathrm d}S}
\newcommand{\SR}{\mathscr{S}}
\newcommand{\TDR}{\mathscr{S^\prime}}
\newcommand{\ft}[1]{\widehat{#1}}
\newcommand{\FT}{\mathscr{F}}
\newcommand{\iFT}{\mathscr{F}^{-1}}
\newcommand{\riesztrans}{\mathfrak{R}}
\newcommand{\norm}[1]{\lVert#1\rVert}
\newcommand{\oseennorm}[2]{\norm{#1}_{#2,\mathrm{Oseen}}}
\newcommand{\snorm}[1]{{\lvert #1 \rvert}}
\newcommand{\snorml}[1]{{\bigl\lvert #1 \big\rvert}}
\newcommand{\snormL}[1]{{\Bigl\lvert #1 \Big\rvert}}
\newcommand{\snormLL}[1]{{\Biggl\lvert #1 \Bigg\rvert}}
\newcommand{\labs}[1]{\big| #1 \big|}
\newcommand{\WSR}[2]{{W^{#1,#2}}} 
\newcommand{\DSR}[2]{D^{#1,#2}} 
\newcommand{\DSRN}[2]{D^{#1,#2}_0} 
\newcommand{\CR}[1]{C^{#1}}  
\newcommand{\LR}[1]{L^{#1}}
\newcommand{\lR}[1]{\ell^{#1}}
\newcommand{\LRloc}[1]{L^{#1}_{loc}} 
\newcommand{\CRi}{\CR \infty}
\newcommand{\CRci}{\CR \infty_0}
\newcommand{\DSRNsigma}[2]{D^{#1,#2}_{0,\sigma}}
\newcommand{\CRcisigma}{\CR{\infty}_{0,\sigma}}
\newcommand{\WSRper}[2]{W^{#1,#2}_{\mathrm{per}}} 
\newcommand{\CRper}{\CR{\infty}_{\mathrm{per}}}
\newcommand{\CRciper}{\CR{\infty}_{0,\mathrm{per}}}
\newcommand{\CRcisigmaper}{\CR{\infty}_{0,\sigma,\mathrm{per}}}
\newcommand{\xoseen}[1]{{X}^{#1}_{\mathrm{Oseen}}}
\newcommand{\xpresper}[1]{{Y}^{#1}_{\mathrm{per}}}
\newcommand{\nsnonlinb}[2]{#1\cdot\grad #2}
\newcommand{\nsnonlin}[1]{\nsnonlinb{#1}{#1}}
\newcommand{\vvel}{v}
\newcommand{\vpres}{p}
\newcommand{\wvel}{w}
\newcommand{\Wvel}{W}
\newcommand{\uvel}{u}
\newcommand{\upres}{\mathfrak{p}}
\newcommand{\weakuvel}{\mathcal{U}}
\newcommand{\Oseensolopr}{S_\rey}
\newcommand{\Oseensoloprcompl}{S^\bot_\rey}
\newcommand{\fundsolvelkjl}{{\varGamma^\bot_{k,jl}}}
\newcommand{\fundsolvelkhh}{{\varGamma^\bot_{k,hh}}}
\newcommand{\fundsolvelcompl}{{\varGamma^{\rey}_\bot}}
\newcommand{\fundsoloseen}{{\varGamma^\rey_{{\mathrm{O}}}}}
\newcommand{\fundsolROseen}{{\gamma^{\rey}_{k}}}
\newcommand{\fundsolLaplace}{\varGamma_{\mathrm{L}}}
\newcommand{\tin}{\text{in }}
\newcommand{\tif}{\text{if }}
\newcommand{\tor}{\text{or }}
\newcommand{\tand}{\text{and }}
\newcommand{\half}{\frac{1}{2}}
\renewcommand{\epsilon}{\varepsilon}
\renewcommand{\phi}{\varphi}
\newcommand{\rey}{\lambda}
\newcommand{\halfrey}{\frac{\rey}{2}}
\newcommand{\tay}{\calt}
\newcommand{\per}{\tay}
\newcommand{\iper}{\frac{1}{\tay}}
\newcommand{\perf}{\frac{2\pi}{\tay}}
\newcommand{\eone}{\e_1}
\newcommand{\bigo}{O}
\newcommand{\Mmultiplier}{M}
\newcommand{\restterm}{\mathscr{R}}
\newcommand{\kroneckerdelta}{\delta}
\newcommand{\cutoff}{\chi}
\newcommand{\bijection}{\Pi}
\newcommand{\bijectioninv}{\Pi^{-1}}
\newcommand{\uf}{u}
\newcommand{\tI}{\tilde{I}}
\newcommand{\newCCtr}[2][d]{
\newcounter{#2}\setcounter{#2}{0}
\expandafter\xdef\csname kyedtheconst#2\endcsname{#1}
}
\newcommand{\Cc}[2][nolabel]{
\stepcounter{#2}
\expandafter\ensuremath{\csname kyedtheconst#2\endcsname_{\arabic{#2}}}
\ifthenelse{\equal{#1}{nolabel}}
{}
{\expandafter\xdef\csname kyedconst#1\endcsname
{\expandafter\ensuremath{\csname kyedtheconst#2\endcsname_{\arabic{#2}}}}}
}
\newcommand{\CcSetCtr}[2]{
\setcounter{#1}{#2}
}
\newcommand{\Cclast}[1]{
\expandafter\ensuremath{\csname kyedtheconst#1\endcsname_{\arabic{#1}}}
}
\newcommand{\Ccllast}[1]{
\addtocounter{#1}{-1}
\expandafter\ensuremath{\csname kyedtheconst#1\endcsname_{\arabic{#1}}}
\addtocounter{#1}{1}
}
\newcommand{\const}[1]{
\expandafter{\ifcsname kyedconst#1\endcsname
  \csname kyedconst#1\endcsname
\else
  \errmessage{Undefined Kyedconstant #1.}%
\fi}
}
\theoremstyle{plain}
\newtheorem{thm}{Theorem}[section]
\newtheorem{defn}[thm]{Definition}
\newtheorem{lem}[thm]{Lemma}
\newtheorem{cor}[thm]{Corollary}
\theoremstyle{remark}
\begin{document}
%%%%%%%%%%%%%%%%%%%%%%%%%%%%%%%%%%%%%%%%%%%%%%%%%%%%%%%%%%%%%%
%%          Title, author, date, abstract, etc.             %%
%%%%%%%%%%%%%%%%%%%%%%%%%%%%%%%%%%%%%%%%%%%%%%%%%%%%%%%%%%%%%%
\title{Time-periodic solutions to the \\ Navier-Stokes equations in the three-dimensional whole-space with a non-zero drift term: Asymptotic profile at spatial infinity}

\author{
Giovanni P. Galdi\thanks{Partially supported by NSF-DMS grant 1614011.}\\ 
Department of Mechanical Engineering and Materials Science\\
University of Pittsburgh\\
Pittsburgh, PA 15261, USA\\
Email: \texttt{galdi@pitt.edu}
\and
Mads Kyed\\ 
Fachbereich Mathematik\\
Technische Universit\"at Darmstadt\\
Schlossgartenstr. 7, 64289 Darmstadt, Germany\\
Email: \texttt{kyed@mathematik.tu-darmstadt.de}\\
}

\date{\today}
\maketitle

\begin{abstract}
An asymptotic expansion at spatial infinity of a weak time-periodic solution to the Navier-Stokes equations with a non-zero drift term in the three-dimensional whole-space is carried out. The asymptotic profile is explicitly identified and expressed in terms of the well-known Oseen fundamental solution. A pointwise estimate is given
for the remainder term.
\end{abstract}

\noindent\textbf{MSC2010:} Primary 35Q30, 35B10, 35C20, 76D05.\\
\noindent\textbf{Keywords:} Navier-Stokes, time-periodic, asymptotic expansion.

%%%%%%%%%%%%%%%%%%%%%%%%%%%%%%%%%%%%%%%%%%%%%%%%%%%%%%%%%%%%%%
%%          Global Constant Counters                        %%
%%%%%%%%%%%%%%%%%%%%%%%%%%%%%%%%%%%%%%%%%%%%%%%%%%%%%%%%%%%%%%
\newCCtr[C]{C}
\newCCtr[c]{c}
\let\oldproof\proof
\def\proof{\CcSetCtr{c}{-1}\oldproof} 
\newCCtr[M]{M}
\newCCtr[B]{B}
\newCCtr[\epsilon]{eps}
\CcSetCtr{eps}{-1}

%%%%%%%%%%%%%%%%%%%%%%%%%%%%%%%%%%%%%%%%%%%%%%%%%%%%%%%%%%%%%%
%%          Main document                                   %%
%%%%%%%%%%%%%%%%%%%%%%%%%%%%%%%%%%%%%%%%%%%%%%%%%%%%%%%%%%%%%%
\section{Introduction}

We investigate the asymptotic structure at spatial infinity of a time-periodic solution to a three-dimensional whole-space Navier-Stokes problem.
More specifically, we consider a solution $\uvel:\R\times\R^3\ra\R^3$ to the Navier-Stokes equations
\begin{align}\label{intro_nspastbodywholespace}
\begin{pdeq}
&\partial_t\uvel -\Delta\uvel -\rey\partial_{x_1}\uvel + \grad\upres + \nsnonlin{\uvel}= f && \tin\RthreeR,\\
&\Div\uvel =0 && \tin\RthreeR,\\
&\lim_{\snorm{x}\ra\infty}\uvel(t,x)=0
\end{pdeq}
\end{align} 
that is time-periodic with period $\per>0$,
\begin{align}\label{intro_timeperiodic}
\begin{aligned}
&\forall (t,x)\in\R\times\R^3:\quad \uvel(t,x) = \uvel(t+\per,x),
\end{aligned}
\end{align}
and corresponds to time-periodic data $f$ of the same period.  
In this context, $\RthreeR$ is a time-space domain. We denote by $t\in\R$ the time variable and by $x\in\R^3$ the spatial variable.
The velocity field $\uvel:\R\times\R^3\ra\R^3$ and scalar function $\upres:\R\times\R^3\ra\R$ represent the Eulerian velocity field 
and pressure term, respectively, of a fluid flow described by the Navier-Stokes equations \eqref{intro_nspastbodywholespace}.
Physically, \eqref{intro_nspastbodywholespace} models the flow of an
incompressible, viscous, Newtonian fluid past an object that moves with velocity $\rey\eone\in\R^3$. We shall consider the case $\rey\neq 0$ corresponding to the case
of an object moving with non-zero velocity. 
The motion of the object is then manifested in the so-called drift term $\rey\partial_{x_1}\uvel$ in \eqref{intro_nspastbodywholespace}.

The goal in the following is to establish an asymptotic expansion of a time-periodic solution $\uvel(t,x)$ as $\snorm{x}\ra\infty$, that is, a pointwise identity 
\begin{align}\label{intro_generalasymptoticexpansion}
\uvel(t,x) = \calg(t,x) + \restterm(t,x)
\end{align}
with $\calg$ an explicitly known vector field, which depends on the data $f$, and
$\restterm(t,x)$ a remainder term that decays faster than $\calg(t,x)$ as $\snorm{x}\ra\infty$. 
In this case, \eqref{intro_generalasymptoticexpansion} is an asymptotic expansion of $\uvel$ at spatial infinity with \emph{asymptotic profile} $\calg(t,x)$. 

In the investigation of Navier-Stokes problems in unbounded domains, information on the 
asymptotic structure of a solution at spatial infinity is an imperative for a comprehensive understanding of the corresponding fluid flow.
A classical result due to \textsc{Finn} \cite{Finn1959b,Finn1965a}, \textsc{Babenko} \cite{babenko1973} and \textsc{Galdi} \cite{Galdi1992b} states that 
an asymptotic expansion of a weak solution $\vvel:\R^3\ra\R^3$ to the three-dimensional \emph{steady-state} Navier-Stokes equations with a non-zero drift term is given by \begin{align}\label{intro_steadystateasympexp}
\vvel(x) = \fundsoloseen(x)\cdot\Bp{\int_{\R^3} f(x)\,\dx} + \bigo\bp{\snorm{x}^{-\frac{3}{2}+\epsilon}}\quad \text{for all }\epsilon>0,
\end{align}
where $\fundsoloseen$ denotes the well-known Oseen fundamental solution. Observe that a steady-state solution is, trivially, also time-periodic. In other words,
a time-independent solution to \eqref{intro_nspastbodywholespace} satisfies \eqref{intro_steadystateasympexp}.
In the following, we shall establish for any weak time-periodic solution to \eqref{intro_nspastbodywholespace} 
the asymptotic expansion
\begin{align}\label{intro_tpasympexp}
\uvel(t,x) = \fundsoloseen(x)\cdot\Bp{\iper\int_0^\per\int_{\R^3} f(t,x)\,\dx\dt} + \bigo\bp{\snorm{x}^{-\frac{3}{2}+\epsilon}}\quad \text{for all }\epsilon>0,
\end{align}
provided the solution possesses a certain amount of local integrability. Clearly, \eqref{intro_tpasympexp} is an extension of \eqref{intro_steadystateasympexp}
by which virtually all the physical properties 
that can be established for steady states from \eqref{intro_steadystateasympexp} can now also be established for time-periodic fluid flows. For example,
it is well known that \eqref{intro_steadystateasympexp} implies the existence of a wake region in the fluid flow, a property that 
renders the flow described by $\vvel$ reasonable from a physical point of view, since the Navier-Stokes equations with a drift terms describes the flow past an object. By \eqref{intro_tpasympexp} the same is true for time-periodic flows.

Existence of the type of time-periodic weak solutions that we shall consider in the following (Definition \ref{WeakSolClassDef}) was shown in \cite{habil} without any
restriction on the ``size'' of the data. Such a solution is therefore a natural starting point for further investigation. In order to establish \eqref{intro_tpasympexp},
however, we shall need to assume additional local integrability; see \eqref{MainThm_condonsol}. This condition is almost equivalent 
to a well-known integrability condition introduced for the corresponding initial-value problem in the pioneering works of \textsc{Leray}. Just as in the case 
of the corresponding initial-value problem, the condition implies enhanced regularity of the weak solution. 

Pointwise information such as \eqref{intro_tpasympexp} is typically derived from a fundamental solution to an appropriate linearization of the 
system of partial differential equations to which $\uvel$ is a solution. If we in \eqref{intro_nspastbodywholespace} neglect the 
nonlinear term, we obtain the time-periodic Oseen system. In the following, we shall identify what can be viewed as a fundamental solution to the time-periodic 
Oseen system. For this purpose, we formulate \eqref{intro_nspastbodywholespace} as a system of partial differential equations on the LCA group 
$\grp:=\R/\per\Z\times\R^3$ and utilize the Fourier transform $\FT_\grp$. We thereby obtain a fundamental solution for which pointwise 
estimates and integrability properties can be established. The proof of the main theorem is mainly based on these estimates and properties.

\section{Statement of the main result}\label{StatementOfMainResultSection}

The main result is an asymptotic expansion at spatial infinity of a weak solution $\uvel$ to \eqref{intro_nspastbodywholespace} 
with an explicit identification of the asymptotic profile and a pointwise estimate of the remainder term. 
We shall assume slightly more integrability of the solution than a weak solution possesses at the outset, but 
the expansion is obtained without any restriction on the ``size'' of the data $f$.

In order to state the main theorem, we introduce some function spaces. 
We first recall the notation $\CRcisigma(\R^3)$ for the function space of smooth solenoidal vector fields of compact 
support, and the notation 
\begin{align*}
\DSRNsigma{1}{2}(\R^3):=\overline{\CRcisigma(\R^3)}^{\norm{\grad\cdot}_{2}}=\setc{\uvel\in\LR{6}(\R^3)^3}{\grad\uvel\in\LR{2}(\R^3)^{3\times 3},\ \Div\uvel=0}
\end{align*}
for the homogeneous Sobolev space of solenoidal vector fields with finite Dirichlet integral (the latter equality above is due to the standard Sobolev embedding theorem). Moreover, we introduce the following spaces
\begin{align*}
&\CRper(\R\times\R^3) := \setcl{w\in\CRi(\R\times\R^3)}{\forall t\in\R:\ w(t+\per,\cdot)=w(t,\cdot)},\\
&\CRciper\bp{\R\times\R^3} := \setcl{w\in\CRper(\R\times\R^3)}{w\in\CRci([0,\per]\times\R^3)},\\
&\CRcisigmaper\bp{\R\times\R^3} := \setcl{w\in\CRciper(\R\times\R^3)^3}{\Div_x w = 0}
\end{align*}
of smooth $\per$-time-periodic functions. 
In addition, we define for sufficiently regular $\per$-time-periodic functions $u:\R\times\R^3\ra\R$
the operators
\begin{align}\label{intro_defofprojGernericExpression}
\proj u(t,x):=\iper\int_0^\per u(s,x)\,\ds\quad\tand\quad\projcompl u(t,x) := u(t,x)-\proj u(t,x).
\end{align}
Note that $\proj$ and $\projcompl$ decompose $u=\proj u + \projcompl u$ into a time-independent part
$\proj u$ and a time-periodic part $\projcompl u$ with vanishing time-average over the period.
Also note that $\proj$ and $\projcompl$ are complementary projections, that is, $\proj^2=\proj$ and $\projcompl=\id-\proj$. 
We shall sometimes refer to $\proj u$ as the \emph{steady state} part of $u$, and to $\projcompl u$ as the \emph{purely oscillatory} part of $u$.

An asymptotic expansion at spatial infinity will be established for a class of time-periodic weak solutions to \eqref{intro_nspastbodywholespace} that we call 
\emph{physically reasonable}. 

\begin{defn}\label{WeakSolClassDef}
Let $f\in\LRloc{1}\bp{\R\times\R^3}^3$ satisfy \eqref{intro_timeperiodic}.
We say that $\weakuvel\in\LRloc{1}\bp{\R\times\R^3}^3$ satisfying \eqref{intro_timeperiodic} is a \emph{physically reasonable weak time-periodic solution} to \eqref{intro_nspastbodywholespace} 
if%\footnote{We can consider the restriction of $\weakuvel$ to the time-space domain $(0,\per)\times\R^3$ as a vector-valued mapping $t\ra\weakuvel(t,\cdot)$. Since $\weakuvel\in\LRloc{1}\bp{(0,\per)\times\R^3}$, it is easy to see that also the projections $\proj\weakuvel$ and $\projcompl\weakuvel$ belong to $\LRloc{1}\bp{(0,\per)\times\R^3}$, whence $\projcompl\weakuvel$ can also be considered as a vector-valued mapping $t\ra\projcompl\weakuvel(t,\cdot)$.}
\begin{enumerate}[1),leftmargin=\parindent, itemindent=0.2cm]
\item\label{WeakSolClassDefProp1} $\weakuvel\in\LR{2}\bp{(0,\per);\DSRNsigma{1}{2}(\R^3)}$,
\item\label{WeakSolClassDefProp2} $\projcompl\weakuvel\in\LR{\infty}\bp{(0,\per);\LR{2}(\R^3)^3}$,
\item\label{WeakSolClassDefProp3} $\weakuvel$ is a generalized $\per$-time-periodic solution to \eqref{intro_nspastbodywholespace} in the sense that for all test functions $\Phi\in\CRcisigmaper\bp{\R\times\R^3}$ holds
\begin{align}\label{WeakSolClassDefDefofweaksol}
\begin{aligned}
\int_0^\per\int_{\R^3} -\weakuvel\cdot\partial_t\Phi +\grad\weakuvel:\grad\Phi -\rey\partial_1\weakuvel\cdot\Phi + (\nsnonlin{\weakuvel})\cdot\Phi\,\dx\dt  = \int_0^\per\int_{\R^3} f\cdot\Phi\,\dx\dt.
\end{aligned}
\end{align}  
\end{enumerate}
\end{defn}
The above class of physically reasonable weak solutions was introduced in \cite{habil}, where existence of a such a solution was shown for any 
$f\in\LR{2}\bp{(0,\per);\DSRN{-1}{2}(\R^3)^3}$ regardless of the data's ``size``. The class is therefore a natural starting point for further 
investigations. The description of the solutions as \emph{physically reasonable} is due to property \ref{WeakSolClassDefProp2}, which in physical
terms expresses that the kinetic energy of the purely oscillatory part $\projcompl\weakuvel$  is finite. In \cite{habil} it was further
required of a physically weak solution that it satisfies an energy inequality. We shall not need this property in the following and thus extend the class 
of physically weak solutions by leaving this condition out in Definition \ref{WeakSolClassDef} above.

The asymptotic profile in the expansion will be given in terms of the classical Oseen fundamental solution
\begin{align}\label{ae_defofoseenfundsol}
\begin{aligned}
&\fundsoloseen:\R^3\setminus\set{0}\ra\R^{3\times3},\quad\big[{\fundsoloseen}(x)\big]_{ij} := (\delta_{ij}\Delta-\partial_i\partial_j)\Phi^\rey(x),\\
&\Phi^\rey(x):=\frac{1}{4\pi\rey} \int_0^{\rey(\snorm{x}+x_1)/2} \frac{1-\e^{-\tau}}{\tau}\,\dtau.
\end{aligned}
\end{align} 
See also \cite[Chapter VII.3]{galdi:book1} for a closed-form expression of $\fundsoloseen$.

We are now in a position to state the main theorem of the paper.
\begin{thm}\label{MainThm}
Let $\rey\neq 0$ and $f\in\CRciper\bp{\R\times\R^3}^3$ be a $\per$-time-periodic vector-field.
If $\uvel$ is a physically reasonable weak time-periodic solution to \eqref{intro_nspastbodywholespace}, in the sense of 
Definition \ref{WeakSolClassDef}, which satisfies  
\begin{align}\label{MainThm_condonsol}
\exists r\in(5,\infty):\quad \projcompl\uvel\in\LR{r}\bp{(0,\per)\times\R^3}^3
\end{align}
then
\begin{align}\label{MainThm_asympexpansion}
\forall(t,x)\in\R\times\R^3:\quad \uvel(t,x) = \fundsoloseen(x)\cdot\Bp{\iper\int_0^\per\int_{\R^3} f\,\dx\dt} + \restterm(t,x)
\end{align}
with $\restterm$  satisfying
\begin{align}\label{MainThm_estofrestterm}
\forall\epsilon>0\ \exists\,\Cc[MainThm_estofresttermconst]{C}>0\ \forall\,\snorm{x}\geq 1,\ t\in\R:\quad \snorm{\restterm(t,x)} \leq \const{MainThm_estofresttermconst}\,\snorm{x}^{-\frac{3}{2}+\epsilon},
\end{align}
where $\const{MainThm_estofresttermconst}=\const{MainThm_estofresttermconst}(\epsilon)$.
\end{thm}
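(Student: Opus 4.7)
The plan is to decompose solution and data into their time-independent (steady) and purely oscillatory parts via $\uvel=\proj\uvel+\projcompl\uvel$ and $f=\proj f+\projcompl f$, and treat the two contributions separately. Since $\proj(\partial_t\uvel)=0$ by periodicity and $\proj$ commutes with spatial differential operators, applying $\proj$ and $\projcompl$ to \eqref{intro_nspastbodywholespace} yields the stationary Oseen-Navier--Stokes system
\begin{align*}
-\Delta\proj\uvel-\rey\partial_1\proj\uvel+\grad\proj\upres=\proj f-\proj(\nsnonlin\uvel),\qquad\Div\proj\uvel=0,
\end{align*}
for the steady part, together with the linear time-periodic Oseen system
\begin{align*}
\partial_t\projcompl\uvel-\Delta\projcompl\uvel-\rey\partial_1\projcompl\uvel+\grad\projcompl\upres=\projcompl f-\projcompl(\nsnonlin\uvel),\qquad\Div\projcompl\uvel=0,
\end{align*}
for the oscillatory part. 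The effective body force in each system is known a priori once sufficient regularity of $\uvel$ is secured.

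As a preliminary step I would bootstrap. The hypotheses $\uvel\in\LR{2}((0,\per);\DSRNsigma{1}{2}(\R^3))$ and $\projcompl\uvel\in\LR{\infty}((0,\per);\LR{2}(\R^3)^3)$, the extra integrability \eqref{MainThm_condonsol} with $r>5$, and the smoothness and compact support of $f$ are the time-periodic analogues of the Leray-Hopf strong-solution conditions. Via parabolic maximal regularity on the group $\grp=\R/\per\Z\times\R^3$ and Sobolev embeddings (in the spirit of Serrin's condition $r>5$ being above the scaling-critical exponent for the three-dimensional Navier-Stokes equations) these should upgrade $\uvel$ to a smooth solution with enough pointwise decay to justify the convolution representations and integration-by-parts identities that follow.

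For the steady part I would invoke the classical Finn-Babenko-Galdi expansion \eqref{intro_steadystateasympexp}. Writing $\nsnonlin\uvel=\Div(\uvel\otimes\uvel)$, the divergence theorem combined with the decay from the bootstrap gives $\int_{\R^3}\proj(\nsnonlin\uvel)\,\dy=0$, hence
\begin{align*}
\int_{\R^3}\bb{\proj f-\proj(\nsnonlin\uvel)}\,\dy=\iper\int_0^\per\int_{\R^3} f\,\dx\dt,
\end{align*}
and \eqref{intro_steadystateasympexp} applied to $\proj\uvel$ yields precisely the asymptotic profile in \eqref{MainThm_asympexpansion} with an $\bigo(\snorm x^{-3/2+\epsilon})$ remainder. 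For the oscillatory part I would use the time-periodic Oseen fundamental solution $\fundsolvelcompl$ introduced in the paper (obtained through the group Fourier transform $\FT_\grp$ on $\grp$) to write
\begin{align*}
\projcompl\uvel(t,x)=\int_0^\per\int_{\R^3}\fundsolvelcompl(t-s,x-y)\cdot\bb{\projcompl f-\projcompl(\nsnonlin\uvel)}(s,y)\,\dy\,\ds,
\end{align*}
and combine pointwise and $\LR{q}$ estimates of $\fundsolvelcompl$---which decays strictly faster at spatial infinity than the stationary Oseen kernel---with a convolution/H\"older argument exploiting $r>5$ to conclude $\snorm{\projcompl\uvel(t,x)}=\bigo(\snorm x^{-3/2+\epsilon})$, actually with an even better rate than the steady profile.

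The main obstacle is twofold. First, one must establish the sharp pointwise and integrability properties of $\fundsolvelcompl$ while accounting for the wake anisotropy inherited from the underlying Oseen structure; this is the technical heart of the ``fundamental-solution analysis'' alluded to in the introduction. Second, one must control the quadratic self-interaction, in particular the cross terms of $\proj\uvel$ with $\projcompl\uvel$ in $\projcompl(\uvel\otimes\uvel)$, whose decay is limited by the slow Oseen decay of $\proj\uvel$. The threshold $r>5$ in \eqref{MainThm_condonsol} is exactly what is needed to close the convolution estimates against $\fundsolvelcompl$ and keep the nonlinear remainder strictly below the $\snorm x^{-3/2}$ barrier, up to the $\epsilon$ loss.
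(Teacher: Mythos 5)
Your decomposition into $\proj\uvel$ and $\projcompl\uvel$, the bootstrap of regularity, and the use of the time-periodic Oseen kernel $\fundsolvelcompl$ for the oscillatory part all match the paper's strategy. The genuine gap is in how you propose to finish the steady part and in the absence of a mechanism to resolve the coupling between the two parts.

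You say you would ``invoke'' the Finn--Babenko--Galdi expansion \eqref{intro_steadystateasympexp} for $\proj\uvel$, after noting that the first moment of the effective body force equals $\int\!\!\int f$. But the effective forcing in the steady system is $\proj f-\nsnonlin{\proj\uvel}-\proj\bb{\nsnonlinb{\projcompl\uvel}{\projcompl\uvel}}$, and the last term is \emph{not} compactly supported. The classical expansion is not a black box you can apply to such data: to show that $\fundsoloseen*\proj\bb{\nsnonlinb{\projcompl\uvel}{\projcompl\uvel}}$ is $\bigo(\snorm{x}^{-3/2})$ you first need a pointwise decay rate for $\projcompl\uvel$, while to estimate the cross terms $\fundsolvelcompl*\bb{\nsnonlinb{\proj\uvel}{\projcompl\uvel}}$ etc.\ in the oscillatory representation you need decay of $\proj\uvel$. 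This circular dependence is not just a difficulty to ``control''; it forces a concrete iteration, and your outline contains no step that breaks it. The paper's resolution runs as follows: (i) a time-periodic variant of Galdi's Dirichlet-integral lemma (Lemma \ref{ae_firstlemma} and Corollary \ref{ae_firstcor}) yields $\iper\int_0^\per\int_{\B^R}\snorm{\grad\projcompl\uvel}^2+\int_{\B^R}\snorm{\grad\proj\uvel}^2\leq CR^{-1+\epsilon}$, an a priori decay that requires no knowledge of pointwise rates; (ii) inserting this into the convolution representations \eqref{ae_asympexpansionthmrepofwvel} and \eqref{ae_asympexpansionthmRepofvvel} gives an inequality of the self-improving form $\snorm{\projcompl\uvel(t,x)}\leq C(R^{-3/2}+R^{-2/5+\epsilon}(\norm{\proj\uvel}_{\LR{\infty}(\B^R)}+\norm{\projcompl\uvel}_{\LR{\infty}}))$ together with $\snorm{\proj\uvel(x)}\leq CR^{-1+\epsilon}$, which can then be iterated; (iii) once a first rate $\snorm{\projcompl\uvel}\lesssim R^{-7/5+\epsilon}$ is obtained, an integration by parts transferring one derivative onto $\fundsolvelcompl$ (justified by $\grad\fundsolvelcompl\in\LR{1}(\grp)$ from Lemma \ref{ae_integrabilityoffundsolcompl}) upgrades this to $R^{-12/5+\epsilon}$, which is what finally keeps $\fundsoloseen*\proj\bb{\nsnonlinb{\projcompl\uvel}{\projcompl\uvel}}$ below $\snorm{x}^{-3/2}$. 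Without items (i)--(iii), or some substitute for them, your argument does not close.

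Two smaller points: the observation $\int_{\R^3}\proj(\nsnonlin{\uvel})\,\dy=0$ identifies the coefficient in the leading term correctly, but it says nothing about the size of the remainder; and the role of the threshold $r>5$ in \eqref{MainThm_condonsol} is not to ``close the convolution estimates against $\fundsolvelcompl$'' but to initiate the $\LR{q}$-bootstrap in Lemma \ref{RegLem} (via the implication chain \eqref{RegLem_FundImplicationNew}) that yields $\projcompl\uvel\in\LR{\infty}(\grp)$ and ultimately full smoothness.
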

It is well-known that the Oseen fundamental solution $\fundsoloseen$ 
has order of decay $\snorm{x}^{-1+\sigma}$ as $\snorm{x}\ra\infty$ in a parabolic so-called wake region in the direction $\eone$, where
$\sigma$ depends on the angle of the specific parabolic region under consideration; see for example \cite[Chapter VII.3]{galdi:book1}. 
More precisely, $\sigma$ is zero if the angle is zero, and increases as the angle increases. 
From the asymptotic expansion established in \eqref{MainThm} one can therefore conclude the existence of such a 
wake region also for a physically reasonable weak time-periodic solution to Navier-Stokes system satisfying \eqref{MainThm_condonsol}
in the case $\rey\neq 0$. Note that the size of the manifested wake 
region is determined by the decay rate obtained for the remainder term $\restterm(t,x)$. 

The additional integrability  \eqref{MainThm_condonsol} is imposed for technical reasons only. It is needed to ensure additional local regularity of the weak solution. We emphasize that condition \eqref{MainThm_condonsol} does \emph{not} translate into 
additional decay of the solution at spatial infinity. The assumption can be removed if one is able to show sufficient local regularity for the weak solution;
an undertaking which seems just as challenging as solving the similar open problem for the initial-value Navier-Stokes problem though.
If the data $f$ is sufficiently ''small``, however, the additional regularity can be established; see \cite[Theorem 2.3]{ertpns}.

\section{Notation}

Points in $\R\times\R^3$ are denoted by $(t,x)$ with $t\in\R$ and $x\in\R^3$.
We refer to $t$ as the time variable and to $x$ as the spatial variable. 

We use $\B_R:=\setc{x\in\R^3}{\snorm{x}<R}$ to denote balls in $\R^3$. Furthermore, for $R_1<R_2$ we let 
$\B_{R_2,R_1}:=\setc{x\in\R^3}{R_1<\snorm{x}<R_2}$. In addition, we put $\B^R:=\R^3\setminus\B_R$. 

For a sufficiently regular function $u:\R\times\R^3\ra\R$, we put $\partial_i u:=\partial_{x_i} u$.

For two vectors $a,b\in\R^3$, we let $a \otimes b\in\R^{3\times 3}$ denote the tensor 
with $(a\otimes b)_{ij}:=a_ib_j$. 
We denote by $\idmatrix$ the identity tensor 
$\idmatrix\in\R^{3\times 3}$.

Constants in capital letters in the proofs and theorems are global, while constants in small letters are local to the proof in which they appear.
Unless otherwise stated, constants are positive.

We make use of Einstein's summation convention and
implicitly sum over all repeated indices appearing in a formula.

\section{Reformulation in a group setting}

We shall take advantage of the formalism developed in \cite{habil,mrtpns} to reformulate the time-periodic Navier-Stokes problem as a 
system of partial differential equation on the group
$\grp:=\R/\per\Z\times\R^3$.
We endow $\grp$ with the canonical topology and differentiable structure inherited from $\R\times\R^3$ via the quotient mapping
\begin{align*}
\quotientmap :\R\times\R^3 \ra \R/\per\Z\times\R^3,\quad \quotientmap(t,x):=([t],x).
\end{align*}
Clearly, $\grp$ is then a locally compact Abelian group.
There is a natural correspondence between $\per$-time-periodic functions defined on $\R\times\R^3$ and functions defined on $\grp$.
We shall take advantage of this correspondence and reformulate
\eqref{intro_nspastbodywholespace}--\eqref{intro_timeperiodic} and the main theorem in a setting of vector fields defined on $\grp$. 
The main advantages of this setting is the ability via the Fourier transform on $\grp$ 
to express solutions to systems of linear partial differential equations in terms of Fourier multipliers. 

The Haar measure $\dg$ on $\grp$ is unique up-to a normalization factor.  
Using the restriction $\bijection:=\pi_{|[0,\per)\times\R^3}$, which is clearly a (continuous) bijection between  $[0,\per)\times\R^3$ and $\grp$,
we choose the normalization factor so that
\begin{align*}
\int_\grp \uvel(g)\,\dg = \iper\int_0^\per\int_{\R^3} \uvel\circ\bijection(t,x)\,\dx\dt.
\end{align*}
For the sake of convenience, we will omit the symbol $\bijection$ in integrals with respect to $\dx\dt$ of $\grp$-defined functions, that is, instead of 
$\iper\int_0^\per\int_{\R^3} \uvel\circ\bijection(t,x)\,\dx\dt$ we simply write $\iper\int_0^\per\int_{\R^3} \uvel(t,x)\,\dx\dt$.

We shall make use of the Schwartz-Bruhat space of generalized Schwartz functions $\SR(\grp)$ 
and the corresponding dual space $\TDR(\grp)$ of tempered distributions; see for example \cite{Bruhat61,habil,mrtpns} for the exact definition. 
We identify the dual group $\dualgrp$ with $\Z\times\R^3$ by associating elements $(k,\xi)\in\Z\times\R^3$ with the characters 
$(t,x)\ra\e^{ix\cdot\xi+ik\perf t}$. In the following, we repeatedly use $(k,\xi)$ to denote points in $\dualgrp$.
The Fourier transform on $\grp$ is then given by the expression
\begin{align*}
\FT_\grp:\SR(\grp)\ra\SR(\dualgrp),\quad \FT_\grp(\uf)(k,\xi):=
\iper\int_0^\per\int_{\R^3} \uf(t,x)\,\e^{-ix\cdot\xi-i\perf k t}\,\dx\dt.
\end{align*}
Recall that 
$\FT_\grp:\SR(\grp)\ra\SR(\dualgrp)$ is a homeomorphism and extends by duality to a homeomorphism $\FT_\grp:\TDR(\grp)\ra\TDR(\dualgrp)$.

The space of smooth functions on $\grp$ is given by
\begin{align}\label{lt_smoothfunctionsongrp}
\CRi(\grp):=\setc{\uf:\grp\ra\R}{\uf\circ\quotientmap \in\CRi(\R\times\R^3)}.
\end{align}
For $\uf\in\CRi(\grp)$, derivatives are defined by 
\begin{align}\label{lt_defofgrpderivatives}
\forall(\alpha,\beta)\in\N_0^n\times\N_0:\quad \partial_t^\beta\partial_x^\alpha\uf := \bb{\partial_t^\beta\partial_x^\alpha (\uf\circ\quotientmap)}\circ\bijectioninv.
\end{align}
By $\CRci(\grp)$ we denote the subspace of $\CRi(\grp)$ of smooth functions with compact support. 
We further introduce for $r,s\in\N_0$ the anisotropic Sobolev space
\begin{align}\label{DefOfAnisotropicSobSpaceOnGrp}
\begin{aligned}
&\WSR{r,s}{q}\np{\grp} := \closure{\CRci\np{\grp}}{\norm{\cdot}_{r,s,q}},\\
&\norm{\uvel}_{r,s,q} := 
\Bp{\sum_{(\alpha,\beta)\in\N_0\times\N_0^3,\ \snorm{\alpha}\leq r,\snorm{\beta}\leq s} \norm{\partial_t^\alpha u}^q_{q} +\norm{\partial_x^\beta u}^q_{q}  
}^{1/q}. 
\end{aligned}
\end{align}
It is standard to verify that $\WSR{r,s}{q}\np{\grp}=\setc{\uf\in\LR{q}\np{\grp}}{\norm{\uf}_{r,s,q}<\infty}$.

The projections $\proj$ and $\projcompl$ are defined on $\grp$-defined functions by the same expressions as in \eqref{intro_defofprojGernericExpression}. 
Indeed, the integral in \eqref{intro_defofprojGernericExpression} is well-defined for any $\uvel\in\SR(\grp)$ and clearly both $\proj$ and $\projcompl$
map $\SR(\grp)$ into itself. By duality, both projections extend to continuous linear mappings 
$\proj:\TDR(\grp)\ra\TDR(\grp)$ and $\projcompl:\TDR(\grp)\ra\TDR(\grp)$. A direct computation shows that
\begin{align}\label{Reformulation_Symbolofproj}
\proj f = \iFT_\grp\bb{\projsymbol(k)\, \FT_\grp\nb{f}},\quad
\projcompl f = \iFT_\grp\bb{\bp{1-\projsymbol(k)}\, \FT_\grp\nb{f}},
\end{align}
where $\projsymbol$ denotes the delta distribution on $\Z$, that is,
\begin{align*}
\projsymbol:\Z\ra\CNumbers,\quad
\projsymbol(k):=
\begin{pdeq}
&1 && \tif k=0,\\
&0 && \tif k\neq0.
\end{pdeq}
\end{align*}
In other words, $\projsymbol$ and $(1-\projsymbol)$ is the Fourier symbol of the projection $\proj$ and $\projcompl$, respectively.

The classical Helmholtz-Weyl projection can also be extended from the Euclidean $\R^3$ setting to $\grp$-defined vector fields. It is convenient to do so by introducing
the Helmholtz-Weyl projection in terms of its Fourier symbol:
\begin{align}\label{Reformulation_HelmholtzProjDefDef}
\hproj: \LR{2}(\grp)^3\ra\LR{2}(\grp)^3,\quad \hproj f := \iFT_\grp\Bb{\Bp{\idmatrix - \frac{\xi\otimes\xi}{\snorm{\xi}^2}} \FT_\grp\nb{f}}.
\end{align}
It follows as in the Euclidean setting that $\hproj$ extends uniquely to a continuous projection $\hproj:\LR{q}(\grp)^3\ra\LR{q}(\grp)^3$ for any $q\in(1,\infty)$.

We now consider for data $F$ the linear problem 
\begin{align}\label{Reformulation_eqforWvel}
\begin{pdeq}
&\partial_t\Wvel -\Delta\Wvel -\rey\partial_1\Wvel  = \projcompl\hproj F && \tin\grp,\\
&\Div\Wvel =0 && \tin\grp.
\end{pdeq}
\end{align} 
If we consider the system in the realm of tempered distributions $\TDR(\grp)$ and apply the Fourier transform $\FT_\grp$, we find, formally at least, the expression
\begin{align}\label{Reformulation_RepFormulaForWvel}
\Wvel = \iFT_\grp\Bb{ \frac{1-\projsymbol(k)}{\snorm{\xi}^2 +i(\perf k-\rey \xi_1) }\,\FT_{\grp}\bb{\hproj F}}
\end{align}
for the solution. 
We shall briefly recall some result from \cite{habil,mrtpns} concerning the validity of this representation formula. For this purpose, we introduce
the linear operators
\begin{align*}
&\Oseensoloprcompl:\ \SR(\grp)^3 \ra \TDR(\grp)^3,\  {\Oseensoloprcompl\psi} := 
\iFT_\grp\Bb{ \frac{1-\projsymbol(k)}{\snorm{\xi}^2 +i(\perf k-\rey \xi_1) }\,\FT_{\grp}\bb{\psi}},\\
&\Oseensoloprcompl\circ\Div:\ \SR(\grp)^{3\times 3} \ra \TDR(\grp)^3,\ \bp{\Oseensoloprcompl\circ\Div\psi}_h := 
\iFT_\grp\Bb{\frac{\np{1-\projsymbol(k)}\,i\xi_j}{\snorm{\xi}^2 +i(\perf k-\rey \xi_1) }{\FT_{\grp}\bb{\psi_{jh}}}}.
\end{align*}
It is easy to see that $\Oseensoloprcompl$ and $\Oseensoloprcompl\circ\Div$ are well-defined in the setting of tempered distributions above. Their mapping properties in an $\LR{q}(\grp)$ setting are given in the following lemma. 
These properties hold for all $\rey\in\R$, so for the sake of completeness we make no restriction to $\rey\neq 0$ in the follow lemmas.

\begin{lem}\label{Reformulation_OseensoloprcomplMappingProps}
Let $\rey\in\R$ and $q\in(1,\infty)$. The operator $\Oseensoloprcompl$ extends uniquely to a bounded linear operator
\begin{align}
&\Oseensoloprcompl:\ \LR{q}(\grp)^3\ra\WSR{1,2}{q}(\grp)^3.\label{Reformulation_OseensoloprcomplMappingPropsProp1}
\end{align}
Let $r:=\frac{5q}{5-q}$ if $q\in(1,5)$, let $r\in(5,\infty)$ if $q=5$, and let $r:=\infty$ if $q\in(5,\infty)$.
% \begin{align*}
% \begin{aligned}
% &r=\frac{5q}{5-q} &&\tif q\in(1,5),\\
% &r\in(5,\infty) &&\tif q=5,\\
% &r=\infty &&\tif q\in(5,\infty),
% \end{aligned}
% \end{align*}
Then the operator $\Oseensoloprcompl\circ \Div$ extends uniquely to a bounded linear operator
\begin{align}
&\Oseensoloprcompl\circ \Div:\ \LR{q}(\grp)^{3 \times 3}\ra \WSR{0,1}{q}(\grp)^3\cap\LR{r}(\grp)^3.\label{Reformulation_OseensoloprcomplMappingPropsProp2}
\end{align}
\end{lem}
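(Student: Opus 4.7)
The plan is to establish both mapping properties via transference from the group $\grp$ to $\R\times\R^3$ combined with multiplier theorems of Mikhlin–Hörmander type, together with a weak-$L^p$ convolution argument for the gain of integrability in the second claim.

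First I would handle the $\dualgrp$-side cutoff via a smooth lift. Choose $\tilde{\chi}\in\CRi(\R;[0,1])$ with $\tilde{\chi}(\eta)=0$ for $\snorm{\eta}\leq \perf/4$ and $\tilde{\chi}(\eta)=1$ for $\snorm{\eta}\geq \perf/2$, so that $\tilde{\chi}(\perf k)=1-\projsymbol(k)$ for every $k\in\Z$. By the de Leeuw-type transference theorem for Fourier multipliers on LCA groups (as exploited in \cite{habil,mrtpns}), to show that a symbol of the form $\tilde{\chi}(\perf k)\,m(\perf k,\xi)$ induces a bounded operator on $\LR{q}(\grp)$ it suffices to show that the lift $(\eta,\xi)\mapsto \tilde{\chi}(\eta)\,m(\eta,\xi)$ is an $\LR{q}(\R\times\R^3)$ Fourier multiplier. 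Crucially, on the support of $\tilde{\chi}$ the denominator $\snorm{\xi}^2+i(\eta-\rey\xi_1)$ stays uniformly away from zero at $\xi=0$ (since $\snorm{\eta}\geq\perf/4$), removing the only potential singularity.

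For Part 1, the boundedness $\Oseensoloprcompl:\LR{q}(\grp)^3\to\WSR{1,2}{q}(\grp)^3$ reduces to the $\LR{q}$-multiplier property of
\begin{equation*}
\frac{\tilde{\chi}(\eta)\,i\eta}{\snorm{\xi}^2+i(\eta-\rey\xi_1)}\qquad\text{and}\qquad \frac{\tilde{\chi}(\eta)\,\xi_i\xi_j}{\snorm{\xi}^2+i(\eta-\rey\xi_1)}.
\end{equation*}
These I would verify by checking a Mikhlin–Hörmander condition in the anisotropic parabolic scaling $\eta\sim\snorm{\xi}^2$, in which the denominator has modulus comparable to $\snorm{\xi}^2+\snorm{\eta}$ on the support of $\tilde{\chi}$, so that all mixed derivatives of the symbol gain the required decay. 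For Part 2, the $\WSR{0,1}{q}$ component follows from the same transference/Mikhlin recipe applied to $\frac{(1-\projsymbol(k))\,\xi_i\xi_j}{\snorm{\xi}^2+i(\perf k-\rey\xi_1)}$. For the $\LR{r}$-estimate I would represent $\Oseensoloprcompl\circ\Div$ as convolution on $\grp$ with $\grad \fundsolvelcompl$, the spatial gradient of the (oscillatory part of the) fundamental solution of the time-periodic Oseen operator. Exploiting the parabolic scaling of the Oseen kernel — the intrinsic ``parabolic dimension'' of $\grp$ is $5=3+2$ — one shows $\grad\fundsolvelcompl\in\weakLR{5/4}(\grp)$. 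Young's inequality for weak-$L^p$ convolutions on the LCA group $\grp$ then yields the claimed mapping $\LR{q}\to\LR{r}$ with $\tfrac{1}{r}=\tfrac{1}{q}-\tfrac{1}{5}$ for $q\in(1,5)$, arbitrary finite $r\in(5,\infty)$ at the endpoint $q=5$, and $r=\infty$ for $q\in(5,\infty)$.

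The main obstacle will be the verification of the parabolic Mikhlin estimates uniformly in $(\eta,\xi)$, because the drift term $\rey\xi_1$ breaks the clean parabolic homogeneity of the pure heat symbol. The convenient way around this that I would employ is to use Weis' operator-valued Fourier multiplier theorem applied to the Oseen operator $-\Delta-\rey\partial_1$ on $\LR{q}(\R^3)$, which is sectorial of angle less than $\pi/2$ and has $\mathcal{R}$-bounded imaginary powers; this reduces the maximal-regularity estimate on $\R\times\R^3$ to an abstract statement about the sectoriality of the Oseen generator and sidesteps a direct symbol-by-symbol computation with the drift.
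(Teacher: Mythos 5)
Your proposal for the first mapping property \eqref{Reformulation_OseensoloprcomplMappingPropsProp1}, and for the $\WSR{0,1}{q}$ component of \eqref{Reformulation_OseensoloprcomplMappingPropsProp2}, follows essentially the same route as the paper: both use transference to lift the $\Z\times\R^3$ multiplier to $\R\times\R^3$ and then apply a Marcinkiewicz/Mikhlin-type multiplier theorem, exploiting that the cut-off $1-\projsymbol(k)$ kills the only zero of the denominator; the paper simply defers these computations to \cite[Proof of Theorem 4.8]{mrtpns}. Your suggested bypass via Weis' operator-valued theorem applied to the Oseen generator is a legitimate alternative for that step.

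Where you genuinely diverge is in the $\LR{r}$-gain. The paper peels off scalar factors $\snorm{k}^{1/5}\,\snorm{\xi}^{3/5}$ from the symbol, leaving an $\LR{q}(\grp)$-bounded multiplier $\Mmultiplier_3$, and obtains the integrability gain by composing with \emph{two separate} Riesz potentials — one of order $1/5$ on the compact factor $\R/2\pi\Z$ and one of order $3/5$ on $\R^3$ — whose mapping properties are recorded in \cite[Proof of Theorem 4.1]{tpfpb}. You instead propose to show the convolution kernel $\grad\fundsolvelcompl$ lies in $\weakLR{5/4}(\grp)$ and apply weak Young's inequality. This is a perfectly sensible and arguably cleaner strategy for $q\in(1,5)$, where $1<q<r<\infty$ and weak Young gives exactly $\tfrac1r=\tfrac1q-\tfrac15$.

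However, as stated the weak-Young argument does \emph{not} deliver the two remaining cases. For $q=5$ the formula gives $r=\infty$, which is outside the admissible range of weak Young (it requires $r<\infty$), and you cannot obtain the claimed $r\in(5,\infty)$ from $\grad\fundsolvelcompl\in\weakLR{5/4}$ alone, since on the infinite-measure space $\grp$ weak $\LR{5/4}$ does not embed into $\weakLR{p}$ or $\LR{p}$ for any $p<5/4$. For $q\in(5,\infty)$ the target is $\LR{\infty}$, which would require the kernel to lie in $\LR{q'}(\grp)$ with $q'=q/(q-1)<5/4$; again, $\weakLR{5/4}$ alone is not enough. To close the gap you would need the \emph{strong} integrability $\grad\fundsolvelcompl\in\LR{p}(\grp)$ for $p$ in a range $[1,5/4)$ (or, as Lemma \ref{ae_integrabilityoffundsolcompl} of the paper later shows, even $[1,4/3)$). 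That strong estimate rests on two ingredients: the parabolic $\rho^{-4}$ singularity at the origin (which is what your weak-$L^{5/4}$ claim captures) \emph{and} the $\snorm{x}^{-4}$ spatial decay at infinity established in Lemma \ref{ae_estoffundsolcompl}. Without the second ingredient the endpoint cases $q\geq 5$ remain unproved. The paper's factorization through a Riesz potential on the compact torus factor sidesteps this issue automatically, because on a finite-measure space the relevant embeddings hold at and beyond the critical exponent.
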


\begin{proof}
Property \eqref{Reformulation_OseensoloprcomplMappingPropsProp1} was shown in \cite[Proof of Theorem 4.8]{mrtpns}.
The proof hereof relies on the fact that the multiplier
\begin{align*}
\Mmultiplier_1:\dualgrp\ra\CNumbers,\quad \Mmultiplier_1(k,\xi):=\frac{1-\projsymbol(k)}{\snorm{\xi}^2 +i(\perf k-\rey \xi_1) }
\end{align*}
that defines $\Oseensoloprcompl$ is smooth, that is, $\Mmultiplier_1\in\CRi(\dualgrp)$, and possesses sufficient decay as $\snorm{\xi}\ra\infty$ and $\snorm{k}\ra\infty$. 
Observe in particular that the numerator $1-\projsymbol(k)$ vanishes in a neighborhood of the only zero at $(0,0)$ of the denominator of $\Mmultiplier_1$.
With the technique from \cite[Proof of Theorem 4.8]{mrtpns}, also the multiplier 
\begin{align*}
\Mmultiplier_2:\dualgrp\ra\CNumbers,\quad \Mmultiplier_2(k,\xi):=\frac{\bp{1-\projsymbol(k)}\,i\xi_j}{\snorm{\xi}^2 +i(\perf k-\rey \xi_1) }
\end{align*}
that defines $\Oseensoloprcompl\circ \Div$ can be analyzed to show 
\begin{align}\label{Reformulation_OseensoloprcomplMappingPropsProp2Est1}
\norm{\grad\Oseensoloprcompl\circ \Div \psi}_q \leq \Cc{c}\,\norm{\psi }_q.
\end{align}
Finally, 
since $\FT_\grp=\FT_{\R/{2\pi\Z}}\circ\FT_{\R^3}$, it follows that
\begin{align*}
\Oseensoloprcompl\circ \Div \psi &= \iFT_{\R/{2\pi\Z}}\Bb{\bp{1-\projsymbol(k)}\snorm{k}^{-\frac{1}{5}}}*_{\R/{2\pi\Z}}\iFT_{\R^3}\Bb{\snorm{\xi}^{-\frac{3}{5}}}*_{\R^3}
\iFT_\grp\Bb{ \Mmultiplier_3\, \FT_\grp\bb{\psi}},
\end{align*}
with
\begin{align*}
\Mmultiplier_3:\dualgrp\ra\CNumbers,\quad \Mmultiplier_3(k,\xi):= \frac{\snorm{k}^{\frac{1}{5}}\,\snorm{\xi}^{\frac{3}{5}}\,\bp{1-\projsymbol(k)}\,i\xi_j}{\snorm{\xi}^2 +i(\perf k-\rey \xi_1) }
\end{align*}
Again with the technique from \cite[Proof of Theorem 4.8]{mrtpns} it can be shown that $\Mmultiplier_3$ is an $\LR{q}(\grp)$ multiplier. The estimate
\begin{align}\label{Reformulation_OseensoloprcomplMappingPropsProp2Est2}
\norm{\Oseensoloprcompl\circ \Div \psi}_r \leq \Cc{c}\,\norm{\psi }_q.
\end{align}
thus follows from embedding properties of the two Riesz potentials 
\begin{align*}
\psi\ra\iFT_{\R^3}\Bb{\snorm{\xi}^{-\frac{3}{5}}}*_{\R^3}\psi,\quad \psi\ra\ \iFT_{\R/{2\pi\Z}}\Bb{\bp{1-\projsymbol(k)}\snorm{k}^{-\frac{1}{5}}}*_{\R/{2\pi\Z}}.
\end{align*}
For the details we refer to \cite[Proof of Theorem 4.1]{tpfpb}. 
By \eqref{Reformulation_OseensoloprcomplMappingPropsProp2Est1} and \eqref{Reformulation_OseensoloprcomplMappingPropsProp2Est2} we conclude \eqref{Reformulation_OseensoloprcomplMappingPropsProp2}.
\end{proof}

\begin{lem}\label{Reformulation_UniquenessComplEq}
Let $\rey\in\R$. If $\uvel\in\TDR(\grp)$ with $\proj\uvel=0$ satisfies
$\partial_t\uvel -\Delta\uvel -\rey\partial_1\uvel = 0$,
then $\uvel=0$.
\end{lem}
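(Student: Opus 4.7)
The plan is to apply the group Fourier transform $\FT_\grp$ to the homogeneous equation, reducing it to the distributional identity
\[
P(k,\xi)\,\ft{\uvel}(k,\xi) = 0 \quad\text{in } \TDR(\dualgrp),
\]
where $P(k,\xi) := \snorm{\xi}^2 + i(\perf k - \rey\xi_1)$ is the symbol of $\partial_t - \Delta - \rey\partial_1$ on $\grp$, and $\dualgrp$ is identified with $\Z\times\R^3$.

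The first step is to locate the zero set of $P$. Separating real and imaginary parts, $P(k,\xi)=0$ forces $\snorm{\xi}^2=0$ and $\perf k - \rey \xi_1=0$, hence $\xi=0$ and then $k=0$. So $P$ vanishes only at $(0,0)\in\dualgrp$, and $\snorm{P(k,\xi)}^2 = \snorm{\xi}^4 + (\perf k - \rey\xi_1)^2$ is bounded below on any closed set excluding a neighborhood of the origin.

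Next I would deduce $\supp\ft{\uvel}\subset\{(0,0)\}$ by the standard division argument: for any Schwartz--Bruhat test function $\phi\in\SR(\dualgrp)$ vanishing in a neighborhood of $(0,0)$, the quotient $\phi/P$ again belongs to $\SR(\dualgrp)$, using estimates on the derivatives of $1/P$ on $\supp\phi$. Consequently
\[
\linf{\ft{\uvel}}{\phi} = \linf{P\ft{\uvel}}{\phi/P} = 0,
\]
which pins $\ft{\uvel}$ to the single point $(0,0)$.

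The conclusion then follows from the discreteness of $\Z$. The function $1-\projsymbol(k)$ is smooth and bounded on $\dualgrp$ and vanishes identically on the \emph{open} set $\{0\}\times\R^3$, which contains $\supp\ft{\uvel}$; openness here is exactly because $\{0\}$ is open in the discrete group $\Z$. Multiplication by $1-\projsymbol(k)$ preserves $\SR(\dualgrp)$ and extends by duality to $\TDR(\dualgrp)$, so $(1-\projsymbol(k))\ft{\uvel}=0$. By \eqref{Reformulation_Symbolofproj} this is precisely $\projcompl\uvel=0$, and together with the hypothesis $\proj\uvel=0$ and the identity $\proj+\projcompl=\id$ we conclude $\uvel=0$.

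The main obstacle I anticipate is the technical verification that $\phi/P\in\SR(\dualgrp)$ in the division step: one needs uniform polynomial bounds on derivatives of $1/P$ on $\supp\phi$ together with care in handling the mixed Schwartz-in-$\xi$/rapidly-decreasing-in-$k$ character of $\SR(\dualgrp)$. Once this is in place, the rest of the argument is essentially formal, and no use is made of a detailed characterization of distributions supported at a point.
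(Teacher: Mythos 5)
Your proof is correct and takes essentially the same route as the paper's: apply $\FT_\grp$, observe that the symbol vanishes only at $(0,0)$ so $\supp\ft{\uvel}\subset\set{(0,0)}$, then use $\proj\uvel=0$ together with the discreteness of $\Z$ (openness of $\set{0}\times\R^3$) to eliminate that point. The paper phrases the last step as $(0,0)\notin\supp\ft{\uvel}$ while you phrase it as $\projcompl\uvel=0$ combined with $\proj+\projcompl=\id$, but these rest on the same observation and are equivalent.
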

\begin{proof}
Applying the Fourier transform $\FT_\grp$, we see that $\bp{\snorm{\xi}^2 +i(\perf k-\rey \xi_1)}\ft{\uvel}=0$. It follows that 
$\supp\ft{\uvel}\subset\set{(0,0)}$. However, since $\proj\uvel=0$ we have $(0,0)\notin \supp\ft{\uvel}$. Consequently $\supp\ft{\uvel}=\emptyset$ and thus $\uvel=0$.
\end{proof}

By Lemma \ref{Reformulation_OseensoloprcomplMappingProps} and Lemma \ref{Reformulation_UniquenessComplEq} we have obtained a statement concerning existence 
and uniqueness of a solution to \eqref{Reformulation_eqforWvel}. More specifically, for data $F\in\LR{q}(\grp)^3$ we find that $\Wvel:=\Oseensoloprcompl\hproj F$ 
is a solution to \eqref{Reformulation_eqforWvel} in $\WSR{1,2}{q}(\grp)^3$ that satisfies the representation formula \eqref{Reformulation_RepFormulaForWvel}. Moreover, this
solution is unique in the subspace $\projcompl\TDR(\grp)^3$ of tempered distributions.
From \eqref{Reformulation_RepFormulaForWvel} we observe, again formally, that $\Wvel=\fundsolvelcompl*F$ with 
\begin{align}\label{Reformulation_defoffundsol}
\fundsolvelcompl\in\TDR(\grp)^{3\times 3},\quad
\fundsolvelcompl := \iFT_\grp\Bb{\frac{1-\projsymbol(k)}{\snorm{\xi}^2 +i(\perf k -\rey \xi_1)}\Bp{\idmatrix-\frac{\xi\otimes\xi}{\snorm{\xi}^2}}}.
\end{align}
In the following, we shall establish both pointwise estimates and integrability properties of $\fundsolvelcompl$, and as a consequence we will
see that the solution $\Wvel$ can indeed be legitimately expressed as $\Wvel=\fundsolvelcompl*F$. Thus, 
we may view $\fundsolvelcompl$ as a fundamental solution to \eqref{Reformulation_eqforWvel}. The rest of this section is devoted to an analysis of $\fundsolvelcompl$.

\begin{lem}\label{ae_estofroseenfundsol}
Let $\rey\in\R$, $k\in\Z\setminus\set{0}$ and 
\begin{align}\label{ae_estofroseenfundsolDef}
\fundsolROseen:\R^3\setminus\set{0}\ra\CNumbers,\quad 
\fundsolROseen(x):=\frac{1}{4\pi\snorm{x}}\e^{-\bp{i\perf k+ (\halfrey)^2}^\half\snorm{x}-\halfrey x_1},
\end{align}
where $\bp{i\perf k+ (\halfrey)^2}^\half$ denotes the square root with non-negative real part.
Then 
\begin{align}
&\snorm{\fundsolROseen(x)}\leq \frac{1}{4\pi\snorm{x}}\e^{-\Cc[ae_estofroseenfundsolpestexponentconst]{C}\snorm{k}^\half\snorm{x}},\label{ae_estofroseenfundsolpest} \\
&\snorml{\partial_j\bb{\fundsolROseen(x)}}\leq \Cc[ae_estofroseenfundsolpestgradConst]{C} \Bp{ \frac{1}{\snorm{x}^2}+ 
\frac{\snorm{k}^\half}{\snorm{x}}}\e^{-\const{ae_estofroseenfundsolpestexponentconst}\snorm{k}^\half\snorm{x}},\label{ae_estofroseenfundsolpestgrad}\\
&\snorml{\partial_j\partial_h\bb{\fundsolROseen(x)}}\leq \Cc[ae_estofroseenfundsolpestgradgradConst]{C} \Bp{ \frac{1}{\snorm{x}^3}+ 
\frac{\snorm{k}^\half}{\snorm{x}^2}+\frac{\snorm{k}}{\snorm{x}}}\e^{-\const{ae_estofroseenfundsolpestexponentconst}\snorm{k}^\half\snorm{x}},\label{ae_estofroseenfundsolpestgradgrad}
\end{align}
with $\const{ae_estofroseenfundsolpestexponentconst}=\const{ae_estofroseenfundsolpestexponentconst}(\rey,\per)$, $\const{ae_estofroseenfundsolpestgradConst}=\const{ae_estofroseenfundsolpestgradConst}(\rey,\per)$
and $\const{ae_estofroseenfundsolpestgradgradConst}=\const{ae_estofroseenfundsolpestgradgradConst}(\rey,\per)$.
\end{lem}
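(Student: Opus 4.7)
\textbf{Strategy.} All three estimates reduce to an analysis of the exponent $h(x) := -\alpha\snorm{x} - \halfrey x_1$, where $\alpha := \bp{i\perf k + (\halfrey)^2}^\half$ (principal branch). The key step is to show that for $k\in\Z\setminus\set{0}$,
\begin{align}\label{keyestplan}
\realpart(\alpha) - \frac{\snorm{\rey}}{2} \geq \Cc{c}\,\snorm{k}^{\half},
\end{align}
for some $\Cc{c} = \Cc{c}(\rey,\per)>0$. Once \eqref{keyestplan} is in hand, everything else is routine chain-rule bookkeeping.

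\textbf{Establishing \eqref{keyestplan}.} Writing $a := (\halfrey)^2$ and $b := \perf k$, so that $\alpha^2 = a + ib$, an explicit computation of the principal square root gives
\[
\realpart(\alpha)^2 = \frac{\snorm{\alpha^2} + a}{2} = \frac{\sqrt{a^2+b^2}+a}{2}.
\]
Hence $\realpart(\alpha)\geq\sqrt{a} = \snorm{\rey}/2$, and the difference-of-squares identity yields
\[
\bp{\realpart(\alpha)-\tfrac{\snorm{\rey}}{2}}\bp{\realpart(\alpha)+\tfrac{\snorm{\rey}}{2}} = \realpart(\alpha)^2 - a = \frac{b^2}{2\bp{\sqrt{a^2+b^2}+a}}.
\]
For $\snorm{k}\geq 1$, both factors $\realpart(\alpha)+\snorm{\rey}/2$ and $\sqrt{a^2+b^2}+a$ are bounded above by multiples of $\snorm{k}^{\half}$ and $\snorm{k}$, respectively, while the numerator $b^2$ is bounded below by a multiple of $\snorm{k}^2$. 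This yields \eqref{keyestplan}.

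\textbf{Estimate \eqref{ae_estofroseenfundsolpest}.} Since $x_1\geq -\snorm{x}$, we have $-\halfrey x_1 \leq \tfrac{\snorm{\rey}}{2}\snorm{x}$, so by \eqref{keyestplan}
\[
\realpart\bp{-\alpha\snorm{x}-\halfrey x_1} \leq -\bp{\realpart(\alpha) - \tfrac{\snorm{\rey}}{2}}\snorm{x} \leq -\const{c}\snorm{k}^{\half}\snorm{x}.
\]
Combined with the $\frac{1}{4\pi\snorm{x}}$ prefactor this gives \eqref{ae_estofroseenfundsolpest}.

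\textbf{Estimates \eqref{ae_estofroseenfundsolpestgrad} and \eqref{ae_estofroseenfundsolpestgradgrad}.} Write $\fundsolROseen(x) = \tfrac{1}{4\pi\snorm{x}}\e^{h(x)}$ and apply the product/chain rule:
\[
\partial_j\fundsolROseen = -\frac{x_j}{4\pi\snorm{x}^3}\e^{h} + \frac{1}{4\pi\snorm{x}}\bp{-\alpha\tfrac{x_j}{\snorm{x}} - \halfrey\delta_{1j}}\e^{h}.
\]
Using $\snorm{\alpha}^2 = \sqrt{a^2+b^2}\leq \Cc{c}\snorm{k}$ for $\snorm{k}\geq 1$, hence $\snorm{\alpha}\leq \Cc{c}\snorm{k}^{\half}$, the prefactor to $\e^h$ is bounded by $\Cc{c}\bp{\snorm{x}^{-2}+\snorm{k}^{\half}\snorm{x}^{-1}}$. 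Inserting the exponential decay from \eqref{ae_estofroseenfundsolpest} (with the same exponent $\const{c}\snorm{k}^{\half}\snorm{x}$, applied pointwise to $\e^h$) yields \eqref{ae_estofroseenfundsolpestgrad}. Differentiating once more and collecting terms by homogeneity in $\snorm{x}$ versus $\snorm{\alpha}\sim\snorm{k}^{\half}$, the algebraic prefactor acquires terms of type $\snorm{x}^{-3}$, $\snorm{k}^{\half}\snorm{x}^{-2}$, and $\snorm{k}\snorm{x}^{-1}$, producing \eqref{ae_estofroseenfundsolpestgradgrad}.

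\textbf{Main obstacle.} There is no deep difficulty; the only delicate point is to track the sign constraint carefully enough to extract \eqref{keyestplan}, because a naive bound $\realpart(\alpha) \gtrsim \snorm{k}^{\half}$ would be useless: one must eliminate the competing term $\halfrey x_1$, which in the worst case $x_1 = -\sgn(\rey)\snorm{x}$ contributes $\snorm{\rey}/2$ against $\realpart(\alpha)$. The difference-of-squares identity above is exactly what extracts the needed margin $\const{c}\snorm{k}^{\half}$.
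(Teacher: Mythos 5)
Your proof is correct, and the conclusion of the key step agrees with the paper's, but you reach it by a genuinely different route. The paper computes $\realpart(\alpha)$ in closed form via the polar representation and a half-angle identity, obtaining $\realpart(\alpha) = \tfrac{\snorm{\rey}/2}{\sqrt{2}}\bigl(\bigl(1+(\perf)^2k^2/(\rey/2)^4\bigr)^{1/2}+1\bigr)^{1/2}$, and then establishes the lower bound $\realpart(\alpha)-\rey/2 \gtrsim \snorm{k}^{1/2}$ by combining strict positivity for each fixed $k\neq 0$ with the limit $\lim_{\snorm{k}\to\infty}(\realpart(\alpha)-\rey/2)/\snorm{k}^{1/2}=\sqrt{\perf/2}$ and the discreteness of $\Z$. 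You instead use the algebraic identity $\realpart(\alpha)^2 = (\snorm{\alpha^2}+\realpart(\alpha^2))/2$ for the principal square root, then rationalize via a difference of squares to get $(\realpart(\alpha)-\snorm{\rey}/2)(\realpart(\alpha)+\snorm{\rey}/2) = b^2/\bigl(2(\sqrt{a^2+b^2}+a)\bigr)$, from which the $\snorm{k}^{1/2}$ rate can be read off directly by elementary upper bounds on the two competing factors. Your argument is more self-contained -- it avoids the trigonometric manipulations and the compactness-plus-asymptotics step -- and it tracks the sign of $\rey$ more carefully: you consistently use $\snorm{\rey}/2$, whereas the paper's inequality $-\halfrey x_1 \leq (\rey/2)\snorm{x}$ as written is only valid for $\rey>0$ (the case $\rey<0$ follows by symmetry, but the paper does not say so). The derivative estimates are handled the same way in both proofs, by chain-rule bookkeeping using $\snorm{\alpha}\lesssim\snorm{k}^{1/2}$ for $\snorm{k}\geq 1$; the paper merely says "similarly, one may verify," which your sketch fills in adequately.
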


\begin{proof}\newCCtr[c]{ae_estofroseenfundsol}
The case $\rey=0$ is trivial, so we assume $\rey\neq 0$.
We first observe that
\begin{align*}
&\realpart\bigg[ \big(i\perf k + (\rey/2)^2 \big)^\half\bigg] =
\labs{i\perf k+(\rey/2)^2}^{\half} \cos\bigg( \half \arctan\bigg(\frac{\perf k}{(\rey/2)^2}\bigg)\bigg) \\
&\qquad\qquad= \big((\perf)^2 k^2+(\rey/2)^4\big)^{\frac{1}{4}} \frac{1}{\sqrt{2}}\bigg(1+\cos\bigg( \arctan\bigg(\frac{\perf k}{(\rey/2)^2}\bigg)\bigg)\bigg)^\half\\
&\qquad\qquad= \big((\perf)^2 k^2+(\rey/2)^4\big)^{\frac{1}{4}} \frac{1}{\sqrt{2}}\bigg(1+
\bigg(1+\frac{(\perf)^2 k^2}{(\rey/2)^4}\bigg)^{-\half}\bigg)^\half\\
&\qquad\qquad= \frac{1}{\sqrt{2}} (\snorm{\rey}/2) \bigg( \bigg(1+\frac{(\perf)^2 k^2}{(\rey/2)^4}\bigg)^{\half}+1\bigg)^\half.
\end{align*} 
It follows that 
\begin{align*}
\forall k\in\Z\setminus{\set{0}}:\ \realpart\bigg[ \big(i\perf k + (\rey/2)^2 \big)^\half - (\rey/2)\bigg] > 0
\end{align*}
and 
\begin{align*}
\lim_{\snorm{k}\ra\infty}\frac{\realpart\big[ \big(i\perf k + (\rey/2)^2 \big)^\half - (\rey/2)\big]}{\snorm{k}^\half}
= \sqrt{\frac{\perf}{2}}.
\end{align*}
Consequently, there is a constant $\Cc[ae_estofroseenfundsolexpconstpre]{ae_estofroseenfundsol}=\const{ae_estofroseenfundsolexpconstpre}(\rey,\per)>0$ such that
\begin{align*}
\forall k\in\Z\setminus{\set{0}}:\ \realpart\bigg[\big(i\perf k + (\rey/2)^2 \big)^\half - (\rey/2)\bigg] \geq 
\const{ae_estofroseenfundsolexpconstpre}\snorm{k}^\half,
\end{align*}
from which we conclude that 
\begin{align*}
\begin{aligned}
&\realpart\bigg[ -\big(i\perf k + (\rey/2)^2 \big)^\half\snorm{x}-(\rey/2) x_1 \bigg]\\
&\qquad \leq -\realpart\bigg[\big(i\perf k + (\rey/2)^2 \big)^\half-(\rey/2) \bigg]\snorm{x} 
\leq -\const{ae_estofroseenfundsolexpconstpre} \snorm{k}^\half \snorm{x}.
\end{aligned}
\end{align*}
We can now estimate
\begin{align}\label{ae_estofroseenfundsolpestpest}
\begin{aligned}
\snorm{\fundsolROseen(x)}&\leq \frac{1}{4\pi\snorm{x}}\e^{\realpart\big[ -\big(i\perf k + (\rey/2)^2 \big)^\half\snorm{x}-(\rey/2) x_1 \big]}\\
&\leq \frac{1}{4\pi\snorm{x}}\e^{-\const{ae_estofroseenfundsolexpconstpre}\snorm{k}^\half\snorm{x}},
\end{aligned}
\end{align}
and conclude \eqref{ae_estofroseenfundsolpest}. Similarly, one may verify \eqref{ae_estofroseenfundsolpestgrad} and \eqref{ae_estofroseenfundsolpestgradgrad}
after taking derivatives.
\end{proof}

We can now establish a pointwise estimate of $\fundsolvelcompl$.

\begin{lem}\label{ae_estoffundsolcompl}
Let $\rey\in\R$ and $\fundsolvelcompl$ be defined as in \eqref{Reformulation_defoffundsol}. 
Then ($j,l,h=1,2,3$)
\begin{align}
&\forall \snorm{x}\geq 2:\quad \Bp{\iper\int_0^\per\snorml{\bb{\fundsolvelcompl}_{jl}(t,x)}^2\,\dt}^\half \leq \frac{\Cc[ae_estoffundsolcomplconst]{C}}{\snorm{x}^3},\label{ae_estoffundsolcomplest}\\
&\forall \snorm{x}\geq 2:\quad \Bp{\iper\int_0^\per\snorml{\partial_h\bb{\fundsolvelcompl}_{jl}(t,x)}^2\,\dt}^\half \leq \frac{\const{ae_estoffundsolcomplconst}}{\snorm{x}^4},\label{ae_estoffundsolcomplgradest}
\end{align}
with $\const{ae_estoffundsolcomplconst}=\const{ae_estoffundsolcomplconst}(\rey,\per)$.
\end{lem}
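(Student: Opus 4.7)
The strategy is Parseval's theorem on the temporal circle $\R/\per\Z$ combined with pointwise estimation of each temporal Fourier coefficient. Writing $\FT_\grp = \FT_{\R/\per\Z}\circ\FT_{\R^3}$, the $k$-th temporal Fourier coefficient of $\fundsolvelcompl$ is
\begin{align*}
\ft{\fundsolvelcompl}(k,\cdot)_{jl}(x) = (1-\projsymbol(k))\, \iFT_{\R^3}\Bb{\frac{\delta_{jl} - \xi_j\xi_l/|\xi|^2}{|\xi|^2 + i(\perf k - \rey\xi_1)}}(x),
\end{align*}
so by Parseval, $\iper\int_0^\per \snorm{[\fundsolvelcompl]_{jl}(t,x)}^2\dt = \sum_{k\neq 0}\snorm{\ft{\fundsolvelcompl}(k,x)_{jl}}^2$, the $k=0$ mode dropping out because of the projection $\projcompl$. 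The problem thus reduces to a pointwise bound on each $\ft{\fundsolvelcompl}(k,x)_{jl}$ together with a summation in $k$.

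To bring Lemma~\ref{ae_estofroseenfundsol} into play, I express each Fourier mode in terms of the scalar fundamental solution $\fundsolROseen$. Rewriting $\delta_{jl} - \xi_j\xi_l/|\xi|^2 = (|\xi|^2\delta_{jl}-\xi_j\xi_l)/|\xi|^2$ and transferring the factor $1/|\xi|^2$ to a Newtonian potential $N(x):=1/(4\pi|x|)$ in physical space yields the representation
\begin{align*}
\ft{\fundsolvelcompl}(k,x)_{jl} = \delta_{jl}\fundsolROseen(x) + \partial_j\partial_l\Phi^{\rey,k}(x),\qquad \Phi^{\rey,k}:=N*\fundsolROseen,
\end{align*}
with $-\Delta\Phi^{\rey,k}=\fundsolROseen$. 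For the first term, \eqref{ae_estofroseenfundsolpest} combined with the elementary inequality $|x|\,e^{-\alpha|x|}\leq C/\alpha$ recasts $|\fundsolROseen(x)|$ in the form $Ce^{-c'|k|^{1/2}|x|}/(|k|^{1/2}|x|^2)$ for $|x|\geq 2$. For the Hessian term I transfer one derivative to $\fundsolROseen$, writing $\partial_j\partial_l\Phi^{\rey,k}(x) = \int_{\R^3}\partial_j N(x-y)\,\partial_l\fundsolROseen(y)\dy$, which is admissible since both $\partial_j N(\cdot)\sim|\cdot|^{-2}$ and $\partial_l\fundsolROseen$ have locally integrable singularities and $\fundsolROseen$ decays at infinity. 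Splitting the integration domain at $|y|=|x|/2$: on $\{|y|\leq|x|/2\}$ one uses $|\partial_jN(x-y)|\leq C/|x|^2$ together with the $L^1$-estimate of $\partial_l\fundsolROseen$ obtained from \eqref{ae_estofroseenfundsolpestgrad}; on $\{|y|\geq|x|/2\}$ the exponential factor $e^{-c|k|^{1/2}|y|}$ is split as $e^{-c|k|^{1/2}|x|/4}\cdot e^{-c|k|^{1/2}|y|/2}$ to extract decay in $|x|$. The sum over $k$ is then controlled by the elementary bound $\sum_{k\geq 1} e^{-ck^{1/2}|x|}\leq C/|x|^2$ for $|x|\geq 1$ (comparison with $\int_0^\infty e^{-ct^{1/2}|x|}\dt$ via the substitution $s=t^{1/2}|x|$), yielding \eqref{ae_estoffundsolcomplest} after taking square roots.

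The gradient estimate \eqref{ae_estoffundsolcomplgradest} follows by the same argument with one extra spatial derivative: the symbol of $\partial_h\fundsolvelcompl$ carries an additional factor $i\xi_h$, which translates into an extra derivative acting on $\fundsolROseen$ and $\Phi^{\rey,k}$, so that \eqref{ae_estofroseenfundsolpestgradgrad} takes the role of \eqref{ae_estofroseenfundsolpestgrad} and the domain-splitting argument yields an $|x|^{-3}$ bound in place of $|x|^{-2}$. The main obstacle throughout is the uniform-in-$k$ pointwise control of $\partial_j\partial_l\Phi^{\rey,k}$: one must carefully balance the $|x-y|^{-2}$-singularity of the Newtonian kernel against the $|y|^{-2}$-type singularity and the exponential decay of $\partial_l\fundsolROseen$. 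This balance is exactly what allows one to extract a pointwise decay rate strictly better than the $|x|^{-1}$ available for the steady-state Oseen tensor, reflecting the absence of a wake when $k\neq 0$.
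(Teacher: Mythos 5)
Your overall architecture — Plancherel on the torus, pointwise-in-$k$ bounds via the decomposition $\delta_{jl}\fundsolROseen + \partial_j\partial_l(N*\fundsolROseen)$ with $N=\fundsolLaplace$, then $\ell^2$-summation — is the same as the paper's. The diagonal term $\delta_{jl}\fundsolROseen$ is handled correctly. However, there is a genuine gap in your treatment of the Hessian term on the inner region.

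You transfer a single derivative globally, writing $\partial_j\partial_l(N*\fundsolROseen)(x) = \int \partial_j N(x-y)\,\partial_l\fundsolROseen(y)\,\dy$, and then split at $\snorm{y}=\snorm{x}/2$. On $\set{\snorm{y}\leq\snorm{x}/2}$ your bound is $\snorm{x}^{-2}\,\norm{\partial_l\fundsolROseen}_{L^1}$. But from \eqref{ae_estofroseenfundsolpestgrad}, the singularity of $\partial_l\fundsolROseen$ at the origin is $\snorm{y}^{-2}$, and one computes $\norm{\partial_l\fundsolROseen}_{L^1}\sim\snorm{k}^{-1/2}$ (the $\snorm{y}^{-2}$ piece integrated against $\e^{-c\snorm{k}^{1/2}\snorm{y}}$ gives only one power of $\snorm{k}^{-1/2}$). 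The contribution of the inner region is therefore of order $\snorm{x}^{-2}\snorm{k}^{-1/2}$, with \emph{no} exponential factor in $\snorm{k}^{1/2}\snorm{x}$ because the integrand is not small near $y=0$. This defeats the argument in two ways: (a) the $\snorm{x}$-exponent is $-2$, not $-3$; and (b) $\sum_{k\neq 0}\snorm{k}^{-1}$ diverges, so the closing step $\sum_{k\geq 1}\e^{-ck^{1/2}\snorm{x}}\lesssim\snorm{x}^{-2}$ that you invoke simply does not apply to this piece.

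The repair, and what the paper actually does, is to treat the region near the origin \emph{without} transferring a derivative to $\fundsolROseen$: there $\snorm{x-y}\geq\snorm{x}/2$, so both derivatives can stay on the Newtonian kernel, yielding $\snorm{x-y}^{-3}\lesssim\snorm{x}^{-3}$ (the correct $\snorm{x}$-decay), while the undifferentiated $\fundsolROseen$ has only a $\snorm{y}^{-1}$-singularity and hence $\int_{\B_R}\snorm{\fundsolROseen}\,\dy\lesssim\snorm{k}^{-1}$ (the correct $\snorm{k}$-decay). The paper accordingly uses a \emph{three-way} split via a cutoff supported on an annulus around $x$: the derivative is transferred to $\fundsolROseen\cdot\cutoff_R$ only on that annulus (where $\snorm{y}\sim R$ and the exponential gives arbitrary powers of $(\snorm{k}^{1/2}R)^{-1}$), whereas in the inner ball $\B_R$ and the outer region $\B^{3R}$ both derivatives remain on $\fundsolLaplace(x-\cdot)$. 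Each of the three pieces is then bounded by $\snorm{k}^{-1}\snorm{x}^{-3}$, which is square-summable. Your two-way split, with the derivative transferred globally, cannot reproduce this on the region near the origin.
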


\begin{proof}\newCCtr[c]{ae_estoffundsolcompl}
For $k\in\Z\setminus\set{0}$ we put
\begin{align*}
\fundsolvelkjl:= \iFT_{\R^3}\Bb{\frac{\xi_j\xi_l}{\snorm{\xi}^2}\cdot \frac{1}{\snorm{\xi}^2 +i(\perf k -\rey \xi_1)} } \in\TDR(\R^3).
\end{align*}
A direct computation yields
\begin{align}\label{ae_estoffundsolcomplrepasconvolution}
\fundsolvelkjl = \partial_j\partial_l\iFT_{\R^3}\bb{\FT_{\R^3}\bp{\fundsolLaplace}\cdot \FT_{\R^3}\bp{\fundsolROseen}}
\end{align}
with
\begin{align*}
\fundsolLaplace:\R^3\setminus\set{0}\ra\CNumbers,\quad \fundsolLaplace(x):= \frac{1}{4\pi\snorm{x}}
\end{align*}
and $\fundsolROseen$ given by \eqref{ae_estofroseenfundsolDef}.
In view of \eqref{ae_estofroseenfundsolpest}, we see that both $\fundsolLaplace$ and $\fundsolROseen$ are ``regular'' enough
to express the right-hand side of \eqref{ae_estoffundsolcomplrepasconvolution} in terms of a classical convolution integral. 
More precisely, we have
\begin{align}\label{ae_estoffundsolcomplrepasclassicalconvolution}
\fundsolvelkjl(x) = \partial_j\partial_l \int_{\R^3}\fundsolLaplace(x-y)\,\fundsolROseen(y)\,\dy.
\end{align}
We introduce at this point a ``cut-off'' function $\cutoff\in\CRci(\R;\R)$ with
\begin{align*}
\cutoff(x)=
\begin{pdeq}
&0 && \tif 0\leq \snorm{x}\leq \half\ \ \tor\ 4\leq \snorm{x},\\
&1 && \tif 1\leq \snorm{x}\leq 3,
\end{pdeq}
\end{align*}
and define for $R>0$
\begin{align*}
\cutoff_R\in\CRci(\R^3;\R),\quad \cutoff_R(x):=\cutoff(\snorm{x}/R).
\end{align*}
Now consider an $x\in\R^3$ with $\snorm{x}>2$. Put $R:=\frac{\snorm{x}}{2}$. Note that this implies $x\in\B_{3R,R}$.
We use $\cutoff_R$ to decompose the convolution integral in \eqref{ae_estoffundsolcomplrepasclassicalconvolution} as follows:
\begin{align}\label{ae_estoffundsolcomplConvIntSplit}
\begin{aligned} 
\fundsolvelkjl(x)
&=\ \partial_j\partial_l \int_{\B_{4R,R/2}}\fundsolLaplace(x-y)\,\fundsolROseen(y)\,\cutoff_R(y)\, \dy\\
&\quad +\partial_j\partial_l \int_{\B^{3R}}\fundsolLaplace(x-y)\,\fundsolROseen(y)\,\bp{1-\cutoff_R(y)}\, \dy\\
&\quad+\partial_j\partial_l \int_{\B_{R}}\fundsolLaplace(x-y)\,\fundsolROseen(y)\,\bp{1-\cutoff_R(y)}\, \dy\\
&=:I_1(x)+I_2(x)+I_3(x).
\end{aligned}
\end{align}
We can estimate
\begin{align}\label{ae_estoffundsolcomplestofI1a}
\begin{aligned}
\snorm{I_1(x)}\leq \int_{\B_{4R,R/2}}\frac{1}{4\pi\snorm{x-y}^2}\,\snorml{\partial_j\bb{\fundsolROseen(y)\,\cutoff_R(y)}}\, \dy.
\end{aligned}
\end{align}
Utilizing \eqref{ae_estofroseenfundsolpest} and \eqref{ae_estofroseenfundsolpestgrad}, we see that for
$y\in\B_{4R,R/2}$ holds
\begin{align*}
\begin{aligned}
\snorml{\partial_j\bb{\fundsolROseen(y)\,\cutoff_R(y)}}\leq 
\Cc{ae_estoffundsolcompl}\frac{1}{\snorm{k}R^{4}},
\end{aligned}
\end{align*}
with $\Cclast{ae_estoffundsolcompl}=\Cclast{ae_estoffundsolcompl}(\rey,\per)$.
It follows that 
\begin{align}\label{ae_estoffundsolcomplestofI1final}
\begin{aligned}
\snorm{I_1(x)}\leq 
\Cc{ae_estoffundsolcompl}\frac{1}{\snorm{k}R^{3}},
\end{aligned}
\end{align}
with $\Cclast{ae_estoffundsolcompl}=\Cclast{ae_estoffundsolcompl}(\rey,\per)$.
Using again \eqref{ae_estofroseenfundsolpest}, we further estimate
\newlength{\kyedtmpsize}
\settoheight{\kyedtmpsize}{\mbox{$\snorm{k}^{\frac{3}{2}}$}}
\newlength{\kyedtmpsizeb}
\settoheight{\kyedtmpsizeb}{\mbox{$\snorm{k}^{3}$}}
\begin{align}\label{ae_estoffundsolcomplestofI2final}
\begin{aligned}
\snorm{I_2(x)} &\leq \Cc{ae_estoffundsolcompl}\int_{\B^{3R}}\frac{1}{\snorm{x-y}^3}\,\snorml{\fundsolROseen(y)}\, \dy
\leq \Cc{ae_estoffundsolcompl}\int_{\B^{3R}}\frac{1}{\snorm{x-y}^3}\,\frac{1}{\text{\raisebox{0pt}[\kyedtmpsizeb]{$\snorm{y}$}}}
\e^{-\const{ae_estofroseenfundsolpestexponentconst}\snorm{k}^\half\snorm{y}}\, \dy\\
&\leq \Cc{ae_estoffundsolcompl}\int_{\B^{3R}}\frac{1}{\text{\raisebox{0pt}[\kyedtmpsize]{$R^3$}}}\,\frac{1}{\text{\raisebox{0pt}[\kyedtmpsize]{$\snorm{y}$}}}
\frac{1}{\snorm{k}^{\frac{3}{2}}\snorm{y}^3}\, \dy
\leq \Cc{ae_estoffundsolcompl}\frac{1}{\snorm{k}R^{3}},
\end{aligned}
\end{align}
with $\Cclast{ae_estoffundsolcompl}=\Cclast{ae_estoffundsolcompl}(\rey,\per)$, 
and 
\begin{align}\label{ae_estoffundsolcomplestofI3final}
\begin{aligned}
\snorm{I_3(x)} &\leq \Cc{ae_estoffundsolcompl}\int_{\B_{R}}\frac{1}{\snorm{x-y}^3}\,\snorml{\fundsolROseen(y)}\, \dy
\leq \Cc{ae_estoffundsolcompl}\int_{\B_{R}}\frac{1}{R^3}\,\frac{1}{\snorm{y}}\e^{-\const{ae_estofroseenfundsolpestexponentconst}\snorm{k}^\half\snorm{y}}\, \dy\\
&= \Cc{ae_estoffundsolcompl}\frac{1}{R^3}\,\int_0^R
r\e^{-\const{ae_estofroseenfundsolpestexponentconst}\snorm{k}^\half r}\, \dr
\leq \Cclast{ae_estoffundsolcompl}\frac{1}{\snorm{k}R^3}\,\int_0^\infty s\e^{-\const{ae_estofroseenfundsolpestexponentconst}s}\, \ds
= \Cc{ae_estoffundsolcompl}\frac{1}{\snorm{k}R^3},
\end{aligned}
\end{align}
with $\Cclast{ae_estoffundsolcompl}=\Cclast{ae_estoffundsolcompl}(\rey,\per)$. Combining \eqref{ae_estoffundsolcomplestofI1final},
\eqref{ae_estoffundsolcomplestofI2final} and \eqref{ae_estoffundsolcomplestofI3final}, we deduce
\begin{align*}
\snorml{\fundsolvelkjl(x)}\leq\Cc{ae_estoffundsolcompl}\frac{1}{\snorm{k}\snorm{x}^3},
\end{align*}
with $\Cclast{ae_estoffundsolcompl}=\Cclast{ae_estoffundsolcompl}(\rey,\per)$.
Denoting by $\FT_{\R/\per\Z}$ the Fourier transform on the torus $\R/\per\Z$, we employ Plancherel's theorem to deduce
\begin{align*}
\begin{aligned}
&\Bp{\iper\int_0^\per\snorm{\bb{\fundsolvelcompl}_{jl}(t,x)}^2\,\dt}^\half\\
&\qquad= \Bp{\iper\int_0^\per\snormLL{\iFT_{\R/\per\Z}\Bb{\bp{1-\projsymbol(k)}\bp{\kroneckerdelta_{jl}\fundsolvelkhh(x)-\fundsolvelkjl(x)}}}^2\,\dt}^\half\\
&\qquad= \norm{{\bp{1-\projsymbol(k)}\bp{\kroneckerdelta_{jl}\fundsolvelkhh(x)-\fundsolvelkjl(x)}}}_{\lR{2}(\Z)}\\
&\qquad\leq \Cc{ae_estoffundsolcompl}\,\Bp{\sum_{k\in\Z\setminus\set{0}}\frac{1}{\bp{\snorm{k}\snorm{x}^3}^2}}^\half
\leq \Cc{ae_estoffundsolcompl}\,\frac{1}{\snorm{x}^3},
\end{aligned}
\end{align*}
which yields \eqref{ae_estoffundsolcomplest}. 
Differentiating both sides in \eqref{ae_estoffundsolcomplConvIntSplit} we obtain a formula for $\partial_h\fundsolvelkjl$
based on which we can proceed as above 
to deduce \eqref{ae_estoffundsolcomplgradest}.
\end{proof}

\begin{lem}\label{ae_integrabilityoffundsolcompl}
Let $\rey\in\R$ and $\fundsolvelcompl$ be defined as in \eqref{Reformulation_defoffundsol}. 
Then
\begin{align}
&\forall r\in\Bp{1,\frac{5}{3}}:\quad \fundsolvelcompl\in\LR{r}(\grp)^{3\times 3},\label{ae_integrabilityoffundsolcomplsummability}\\
&\forall r\in\Bbp{1,\frac{4}{3}}:\quad \partial_j\fundsolvelcompl\in\LR{r}(\grp)^{3\times 3}\quad (j=1,2,3).\label{ae_integrabilityoffGradundsolcomplsummability}
\end{align}
\end{lem}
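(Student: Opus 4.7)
I would split the integral $\int_\grp |\fundsolvelcompl|^r \, dg$ into a far-field piece over $\{|x| \geq 2\}$ and a near-field piece over $\{|x| \leq 2\}$, handling them by different methods.

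For the far field, Jensen's inequality applied to the normalized measure $\iper\,dt$ on $[0, \per)$ gives, for $r \in (1, 2]$,
\[
  \iper \int_0^\per |\fundsolvelcompl(t, x)|^r \, dt
  \leq \Bp{\iper \int_0^\per |\fundsolvelcompl(t, x)|^2 \, dt}^{r/2}
  \leq C\, \snorm{x}^{-3r},
\]
by Lemma \ref{ae_estoffundsolcompl}. Since $\int_{|x| \geq 2} \snorm{x}^{-3r}\, dx$ converges precisely when $r > 1$, the far-field contribution lies in $\LR{r}(\grp)$ for every $r \in (1, 5/3)$. An identical computation with the gradient bound $C\snorm{x}^{-4}$ yields integrability of $\partial_j \fundsolvelcompl$ on the far field whenever $r > 3/4$, which certainly covers $[1, 4/3)$.

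For the near field, I would factor out the Helmholtz projection. Setting
\[
  K := \iFT_\grp \Bb{\frac{1 - \projsymbol(k)}{|\xi|^2 + i(\perf k - \rey\xi_1)}},
\]
one has $\fundsolvelcompl = \hproj\bb{K\,\idmatrix}$. Because $\hproj$ extends to a bounded operator on $\LR{r}(\grp)^3$ for every $r \in (1, \infty)$, it is enough to show local $\LR{r}$-integrability of $K$ for $r < 5/3$ and of $\partial_j K$ for $r < 4/3$. Near the space-time origin in $\grp$, the kernel $K$ coincides with the heat fundamental solution $\Gamma_H(t, x) = (4\pi t)^{-3/2} \e^{-|x|^2/4t}$ up to a smooth correction: the removal of the $k = 0$ Fourier mode and the drift $-\rey\partial_1$ only influence smooth, low-frequency components of the multiplier and cannot sharpen the leading singularity. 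Using the parabolic pointwise bound $|\Gamma_H(t, x)| \lesssim d(t, x)^{-3}$ with parabolic distance $d := \sqrt{t} + |x|$, and noting that the volume element satisfies $dt\, dx \sim d^4\, dd$, one computes
\[
  \int_{d \leq 1} |K|^r \, dt\, dx \lesssim \int_0^1 d^{\,4 - 3r}\, dd,
\]
which is finite exactly when $r < 5/3$, yielding \eqref{ae_integrabilityoffundsolcomplsummability}. The corresponding bound for $\partial_j K$, with the sharper range $r < 4/3$ (beyond the $r < 5/4$ obtainable from the naive pointwise bound $|\partial_j \Gamma_H| \lesssim d^{-4}$), is obtained by combining this local analysis with the Riesz-potential-type mapping from Lemma \ref{Reformulation_OseensoloprcomplMappingProps}, which absorbs the spatial derivative $i\xi_j$ as a fractional gain in the symbol.

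\textbf{Main obstacle.} The trickiest step is making rigorous the claim that $K$ behaves like $\Gamma_H$ near $(0, 0) \in \grp$. One needs a careful Fourier-multiplier analysis to separate the singular leading-order heat kernel from smooth error terms arising from the $\projcompl$-projection, from the Oseen drift, and from the periodic wraparound of the heat kernel on $\Torus \times \R^3$. Likewise, obtaining the improvement from $r < 5/4$ to $r < 4/3$ for the gradient, which goes beyond what the heat-kernel scaling alone provides, depends on exploiting the one-derivative gain encoded in the Oseen symbol through the Riesz-potential framework established in Lemma \ref{Reformulation_OseensoloprcomplMappingProps}.
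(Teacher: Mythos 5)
Your far-field step is sound, and in fact more elementary than anything in the paper: Jensen's inequality applied to the $L^2$-in-time bounds of Lemma \ref{ae_estoffundsolcompl} does give $\snorm{x}^{-3r}$ (resp.\ $\snorm{x}^{-4r}$), integrable on $\B^2$ precisely for $r>1$ (resp.\ $r>3/4$); the paper uses a variant of this very idea only for the endpoint $r=1$ in \eqref{ae_integrabilityoffGradundsolcomplsummability}. The near-field step, however, has two genuine gaps. First, the reduction through $\hproj$ does not work as stated: $\hproj$ is a matrix of Riesz transforms, hence a nonlocal Calder\'on--Zygmund operator, so ``$K\in\LR{r}_{loc}$ implies $\hproj K\in\LR{r}_{loc}$'' is false. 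To exploit the $\LR{r}(\grp)$-boundedness of $\hproj$ you need global $\LR{r}(\grp)$ membership of $K$ — which requires far-field estimates on $K$ itself, not on $\fundsolvelcompl$ — or, alternatively, you must insert a cutoff $\phi$ near $(0,0)$ and handle $\hproj(\phi K)$ and $\hproj((1-\phi)K)$ separately (the latter in a space where the tail is cheap, say $\LR{2}$). Second, your own parabolic scaling computation gives $r<5/3$ for $K$ but only $r<5/4$ for $\partial_j K$, and the claimed upgrade to $r<4/3$ by invoking ``the Riesz-potential framework of Lemma \ref{Reformulation_OseensoloprcomplMappingProps}'' is not an argument: that lemma states $\LR{q}\to\LR{q}\cap\LR{r}$ mapping properties of the operator $\Oseensoloprcompl\circ\Div$, which is a different kind of statement from $\LR{r}$-membership of the kernel $\partial_j\fundsolvelcompl$. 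This gap is independent of the heat-kernel comparison you flag as the main obstacle: even granting that $K\sim\Gamma_H$ near the space-time origin, the scaling computation stops at $5/4$.

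The paper's proof sidesteps both problems by working globally in frequency from the start. It factors $\fundsolvelcompl$ as a composition of (i) the Riesz transforms realizing $\hproj$, (ii) a $\grp$-multiplier operator that is bounded on $\LR{q}(\grp)$ by the technique of \cite[Proof of Theorem 4.8]{mrtpns}, and (iii) convolution with $\iFT_\grp(\calk)$, where $\calk(k,\xi)=\bp{1-\projsymbol(k)}\snorm{k}^{-2/5}\bp{1+\snorm{\xi}^2}^{-3/5}$ is a tensor product. The membership $\fundsolvelcompl\in\LR{r}(\grp)$ is thereby reduced to the explicitly computable $\LR{r}$-membership of the fractional-integration kernel $t^{-3/5}$ on $\Torus$ and the Bessel potential kernel $\sim\snorm{x}^{-9/5}$ on $\R^3$, both of which lie in $\LR{r}$ locally exactly for $r<5/3$. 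The gradient case uses the same scheme with exponents $1/4$ and $3/8$, and the $r=1$ endpoint is then recovered from the decay in Lemma \ref{ae_estoffundsolcompl}. No local/global splitting and no comparison to the heat kernel are needed; this is the feature of the paper's route that your proposal is missing.
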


\begin{proof}\newCCtr[c]{ae_integrabilityoffundsolcompl}
We recall the definition \eqref{Reformulation_defoffundsol} of $\fundsolvelcompl$ to find that
\begin{align*}
\bb{\fundsolvelcompl}_{jl} = \bb{\kroneckerdelta_{jl}(\riesztrans_{h}\riesztrans_{h})-\riesztrans_{j}\riesztrans_{l}}\circ
\iFT_\grp\Bb{\frac{\bp{1-\projsymbol(k)} \snorm{k}^{\frac{2}{5}} \bp{1+\snorm{\xi}^2}^{\frac{3}{5}} }{\snorm{\xi}^2 +i(\perf k -\rey \xi_1)}\FT_\grp\bb{\iFT_\grp\bp{\calk}}},
\end{align*}
where $\riesztrans_{j}$ denotes Riesz transform 
\begin{align*}
\riesztrans_{j}:\SR(\grp)\ra\TDR(\grp),\quad \riesztrans_{j}(f):=\iFT_{\R^3}\Bb{\frac{\xi_j}{\snorm{\xi}}\FT_{\R^3}\np{f}}
\end{align*}
and 
\begin{align*}
\calk:\Z\times\R^3\ra\CNumbers,\quad \calk(k,\xi):= \bp{1-\projsymbol(k)} \snorm{k}^{-\frac{2}{5}} \bp{1+\snorm{\xi}^2}^{-\frac{3}{5}}. 
\end{align*}
By well-known theory, $\riesztrans_{j}$ extends for all $q\in(1,\infty)$ to a bounded linear operator
$\riesztrans_{j}:\LR{q}(\grp)\ra\LR{q}(\grp)$; see for example \cite[Corollary 4.2.8]{Grafakos1}.
Moreover, one can verify with the technique from \cite[Proof of Theorem 4.8]{mrtpns} that  
\begin{align*}
\Mmultiplier:\dualgrp\ra\CNumbers,\quad \Mmultiplier(k,\xi):=\frac{\bp{1-\projsymbol(k)} \snorm{k}^{\frac{2}{5}} \bp{1+\snorm{\xi}^2}^{\frac{3}{5}} }{\snorm{\xi}^2 +i(\perf k -\rey \xi_1)}
\end{align*}
is an $\LR{q}(\grp)$ multiplier. Thus, \eqref{ae_integrabilityoffundsolcomplsummability} follows if we can show that $\iFT_\grp\np{\calk}\in\LR{r}(\grp)$. 
Choosing $[-\half\per,\half\per)$ as a realization of $\Torus$, it follows that 
\begin{align}\label{ae_integrabilityoffundsolcomplInvFT1}
\iFT_{\Torus}\Bb{\bp{1-\projsymbol(k)} \snorm{k}^{-\frac{2}{5}}}(t)=t^{-\frac{3}{5}}+ h(t),
\end{align}
where $h\in\CRi\np{\Torus}$; see for example \cite[Example 3.1.19]{Grafakos1}.
Additionally, one can estimate
\begin{align}\label{ae_integrabilityoffundsolcomplInvFT2}
\snormLL{\iFT_{\R^3}\Bb{\bp{1+\snorm{\xi}^2}^{-\frac{3}{5}}}(x)} \leq 
\Cc{ae_integrabilityoffundsolcompl} \Bp{\snorm{x}^{-\frac{9}{5}}\cutoff_{\B_2}(x)+ \e^{-\frac{\snorm{x}}{2}}}; 
\end{align}
see for example \cite[Proposition 6.1.5]{Grafakos2}. We can therefore deduce
\begin{align*}
\iFT_\grp\np{\calk} 
&= \iFT_{\Torus}\Bb{\bp{1-\projsymbol(k)} \snorm{k}^{-\frac{2}{5}}}\otimes \iFT_{\R^3}\Bb{\bp{1+\snorm{\xi}^2}^{-\frac{3}{5}}}\in\LR{r}(\grp)
\end{align*}
for all $r\in\bp{1,\frac{5}{3}}$, which concludes \eqref{ae_integrabilityoffundsolcomplsummability}. To show \eqref{ae_integrabilityoffGradundsolcomplsummability},
we observe that 
\begin{align*}
\partial_j\bb{\fundsolvelcompl}_{jl} = \bb{\kroneckerdelta_{jl}(\riesztrans_{h}\riesztrans_{h})-\riesztrans_{j}\riesztrans_{l}}\circ
\iFT_\grp\Bb{\frac{\bp{1-\projsymbol(k)} \snorm{k}^{\frac{1}{4}} \bp{1+\snorm{\xi}^2}^{\frac{3}{8}} \xi_j }{\snorm{\xi}^2 +i(\perf k -\rey \xi_1)}\FT_\grp\bb{\iFT_\grp\bp{\calj}}},
\end{align*}
where 
\begin{align*}
\calj:\Z\times\R^3\ra\CNumbers,\quad \calj(k,\xi):= \bp{1-\projsymbol(k)} \snorm{k}^{-\frac{1}{4}} \bp{1+\snorm{\xi}^2}^{-\frac{3}{8}}. 
\end{align*}
We then obtain \eqref{ae_integrabilityoffGradundsolcomplsummability} for all $q\in(1,\frac{4}{3})$ by the same computations as above.
It follows in particular that $\partial_j\fundsolvelcompl\in\LRloc{1}\np{\grp}$. The asymptotic decay as $\snorm{x}\ra\infty$ established in 
\eqref{ae_estoffundsolcomplgradest} in Lemma \ref{ae_estoffundsolcompl} therefore allows us to further conclude $\partial_j\fundsolvelcompl\in\LR{1}\np{\grp}$.
\end{proof}

\section{Proof of the main theorem}

We shall make use of the framework for the linearized time-periodic Navier-Stokes system developed in 
\cite{mrtpns} and \cite{habil}. For this purpose, we introduce for $q\in(1,2)$ and $s\in(1,\infty)$ the function space
\begin{align}\label{MR_DefOfxoseenq}
\begin{aligned}
&\xoseen{q,s}(\R^3):=\setcl{\vvel\in\LRloc{1}(\R^3)^3}{\oseennorm{\vvel}{q,s}<\infty},\\
&\oseennorm{\vvel}{q,s} := \norm{\vvel}_{\frac{2q}{2-q}} + \norm{\grad\vvel}_{\frac{4}{4-q}} + \norm{\partial_1\vvel}_q+\norm{\grad^2\vvel}_q+\norm{\grad^2\vvel}_s.  
\end{aligned}
\end{align}
The Banach space $\xoseen{q,s}(\R^3)$ is intrinsically linked to the classical three-dimensional Oseen operator. More specifically,
the norm $\oseennorm{\vvel}{q,s}$ captures precisely the generic integrability of the velocity field of a solution to the three-dimensional steady-state Oseen system.
To describe similar properties for the velocity field of a solution to the time-periodic Oseen system, we introduce for $q\in(1,\infty)$ the space
\begin{align}\label{DefOsWSRperspace}
\begin{aligned}
&\WSRper{1,2}{q}(\RthreeR) := \closure{\CRciper(\R\times\R^3)}{\norm{\cdot}_{1,2,q}},\\
&\norm{\uvel}_{1,2,q}:=\Bp{
\sum_{\snorm{\alpha}\leq 1} 
\iper\int_0^\per\int_{\R^3}\snorm{\partial_t^\alpha\uvel}^q\,\dx\dt 
+
\sum_{\snorm{\beta}\leq 2} 
\iper\int_0^\per\int_{\R^3}\snorm{\partial_x^\beta\uvel}^q\,\dx\dt}^{\frac{1}{q}}.
\end{aligned}
\end{align}
For $q\in(1,3)$ and $s\in(1,\infty)$ we further define
\begin{align}\label{MR_DefOfXpres}
\begin{aligned}
&\xpresper{q,s}\bp{\R\times\R^3}:=\closure{\CRciper(\R\times\R^3)}{\norm{\cdot}_{\xpresper{q,s}}},\\
&\norm{\upres}_{\xpresper{q,s}}:=\Bp{\int_0^\per \norm{\upres(t,\cdot)}_{\frac{3q}{3-q}}^q + \norm{\grad\upres(t,\cdot)}_q^q\,\dt }^{1/q}+\norm{\grad\upres}_{s},
\end{aligned}
\end{align}
which is the space we shall use to characterize the corresponding pressure term of the solution.

We first show that a physically reasonable weak time-periodic solution in the sense of Definition \ref{WeakSolClassDef} satisfying
the integrability condition \eqref{MainThm_condonsol} is in fact a strong solution. 
We then recall the regularity results for strong solutions from \cite{ertpns}, which yield that $\uvel$ is smooth when the data is smooth.

\begin{lem}\label{RegLem}
Let $\rey\neq 0$, $f\in\CRciper\bp{\R\times\R^3}^3$  and 
$\uvel$ be a physically reasonable weak $\per$-time-periodic solution to \eqref{intro_nspastbodywholespace} in the sense of 
Definition \ref{WeakSolClassDef}. If $\uvel$ satisfies \eqref{MainThm_condonsol}, then
\begin{align}\label{RegLemRegOfprojcomplu}
\forall q\in(1,\infty):\  \projcompl\uvel \in\WSRper{1,2}{q}\bp{\R\times\R^3}^3  
\end{align}
and
\begin{align}\label{RegLemRegOfproju}
\forall q\in(1,2),\, s\in(1,\infty):\ \proj\uvel\in \xoseen{q,s}(\R^3).
\end{align}
Moreover, there is a pressure term 
\begin{align}\label{RegLemRegOfPres}
\forall q\in(1,3),\, s\in(1,\infty):\ \upres\in \xpresper{q,s}\bp{\R\times\R^3} 
\end{align}
such that $(\uvel,\upres)$ is a solution to \eqref{intro_nspastbodywholespace}. Additionally, 
\begin{align}\label{RegLemSmoothnes}
(\uvel,\upres)\in\CRi(\R\times\R^3)^3\times \CRi(\R\times\R^3).
\end{align}
\end{lem}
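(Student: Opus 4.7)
The plan is a Serrin-type bootstrap based on decomposing $\uvel=v+w$ with $v:=\proj\uvel$ and $w:=\projcompl\uvel$. Applying $\proj$ and $\projcompl$ to the weak formulation \eqref{WeakSolClassDefDefofweaksol}, and noting that both projections commute with all differential operators appearing in the equation, shows that $v$ is a distributional solution of the steady-state Oseen system on $\R^3$ with forcing $\proj f-\proj(\uvel\cdot\grad\uvel)$, while $w$ solves the time-periodic Oseen system \eqref{Reformulation_eqforWvel} on $\grp$ with forcing $\projcompl f-\projcompl(\uvel\cdot\grad\uvel)$. From Definition \ref{WeakSolClassDef} together with the Sobolev embedding $\DSRNsigma{1}{2}(\R^3)\embeds\LR{6}(\R^3)^3$ one reads off the baseline regularity $v\in\LR{6}(\R^3)$ with $\grad v\in\LR{2}(\R^3)$ and $\grad w\in\LR{2}(\grp)$; the Serrin-type hypothesis \eqref{MainThm_condonsol} further provides $w\in\LR{r}(\grp)$ for some $r>5$.

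The key step is to estimate the nonlinear term piecewise. Writing $\uvel\cdot\grad\uvel=v\cdot\grad v+v\cdot\grad w+w\cdot\grad v+w\cdot\grad w$ and applying H\"older's inequality to each summand yields $\uvel\cdot\grad\uvel\in\LR{q_0}(\grp)^3$ for some $q_0>10/7$ (the critical contribution being $w\cdot\grad w$, lying in $\LR{2r/(r+2)}(\grp)$). By Lemma \ref{Reformulation_UniquenessComplEq} the representation $w=\Oseensoloprcompl\hproj\projcompl\bp{f-\uvel\cdot\grad\uvel}$ is justified, and the first mapping property of Lemma \ref{Reformulation_OseensoloprcomplMappingProps} then upgrades $w$ to $\WSR{1,2}{q_0}(\grp)^3$. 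Simultaneously, classical maximal regularity for the steady-state Oseen operator, cf.~\cite[Chapter VII]{galdi:book1}, applied to the equation for $v$ gives $v\in\xoseen{q_0,s_0}(\R^3)$ for admissible indices. The resulting improvement in the integrability and differentiability of $\uvel$ feeds back into the estimate of the nonlinear term, and iterating this procedure finitely often produces $\projcompl\uvel\in\WSR{1,2}{q}(\grp)^3$ for all $q\in(1,\infty)$ and $\proj\uvel\in\xoseen{q,s}(\R^3)$ for all admissible pairs.

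Finally, the pressure is recovered from the equation: setting $\grad\upres=f+\Delta\uvel+\rey\partial_1\uvel-\partial_t\uvel-\uvel\cdot\grad\uvel$, the right-hand side lies in the appropriate Lebesgue class and, by the weak formulation, annihilates all divergence-free test fields, so De Rham's theorem produces $\upres$, and a standard normalization places it in $\xpresper{q,s}\bp{\R\times\R^3}$ for the required ranges. Smoothness \eqref{RegLemSmoothnes} then follows either from a further differentiation bootstrap (commuting $\partial_t^\alpha\partial_x^\beta$ with the equation and reapplying maximal regularity, exploiting $f\in\CRciper$) or, as indicated by the authors, directly from the regularity results for strong time-periodic solutions established in \cite{ertpns}. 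The main technical obstacle is the careful book-keeping of integrability indices during the bootstrap: since $\grp$ has infinite spatial measure the Lebesgue spaces do not embed into one another, and the four pieces of $\uvel\cdot\grad\uvel$ must be tracked in their respective natural spaces -- the threshold $r>5$ in \eqref{MainThm_condonsol} is precisely what ensures the iteration closes in finitely many steps.
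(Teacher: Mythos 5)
Your overall strategy — decompose $\uvel=v+w$ with $v=\proj\uvel$, $w=\projcompl\uvel$, represent each via the Oseen and time-periodic Oseen solution operators, and bootstrap — is the same as the paper's, as is your use of \cite{ertpns} for the final smoothness step. However, two steps that the paper treats with care are glossed over in a way that leaves a genuine gap.

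First, the downward bootstrap for $v$ does not initialize from the baseline regularity alone. You start from $v\in\LR{6}(\R^3)$, $\grad v\in\LR{2}(\R^3)$, hence $v\cdot\grad v\in\LR{3/2}(\R^3)$ only, and applying the steady Oseen solution operator to an $\LR{3/2}$ datum merely reproduces $v\in\LR{6}$; there is no mechanism to push the integrability \emph{below} $6$, which is exactly what \eqref{RegLemRegOfproju} requires (the norm $\oseennorm{\cdot}{q,s}$ contains $\norm{v}_{2q/(2-q)}$, and $2q/(2-q)\to 2^+$ as $q\to1^+$). The paper's proof first invokes the nonlinear D-solution theory from \cite[Chapter IX]{galdi:book2} to obtain $\vvel\in\LR{3}(\R^3)^3$, so that $\vvel\otimes\vvel\in\LR{3/2}\cap\LR{3}$, and then uses $\Oseensolopr\circ\Div$ to reach $\vvel\in\LR{12/5}\cap\LR{12}$; without this preliminary step your iteration never produces exponents below $6$. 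Second, your iteration for $w$ relies solely on the first mapping property $\Oseensoloprcompl:\LR{q}\to\WSR{1,2}{q}$, and then implicitly on an anisotropic Sobolev embedding of $\WSR{1,2}{q}(\grp)$. This gives a much weaker gain per step than the divergence-form mapping $\Oseensoloprcompl\circ\Div:\LR{q}\to\LR{5q/(5-q)}$ of \eqref{Reformulation_OseensoloprcomplMappingPropsProp2}, which the paper uses to close the implication chain \eqref{RegLem_FundImplicationNew} cleanly (it is precisely the threshold $q=5$ appearing there that makes $r>5$ in \eqref{MainThm_condonsol} sufficient). As written, your assertion that "iterating this procedure finitely often" closes the loop is not substantiated, and the claim that the whole nonlinearity lies in a single space $\LR{q_0}(\grp)$ with $q_0>10/7$ is also not correct: on a space of infinite measure the four pieces $v\cdot\grad v$, $v\cdot\grad w$, $w\cdot\grad v$, $w\cdot\grad w$ live in different $\LR{q}$ ranges and must be fed through the operators separately, with each yielding a different output space. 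In summary: same route, but you need the $\vvel\in\LR{3}$ input from D-solution theory and the $\Div$-form mapping property to actually make the bootstrap converge.
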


\begin{proof}
Put $\vvel:=\proj\uvel$ and $\wvel:=\projcompl\uvel$. 
From Definition \ref{WeakSolClassDef} and standard Sobolev embedding, we infer 
\begin{align}
&\vvel\in\LR{6}(\R^3)^3,\quad
\grad\vvel\in\LR{2}(\R^3)^{3\times 3}, \label{RegLem_IniRegvvel}\\
&\wvel\in\LR{\infty}\bp{(0,\per);\LR{2}(\R^3)^3}\cap\LR{2}\bp{(0,\per);\LR{6}(\R^3)^3},\quad
\grad\wvel\in\LR{2}\bp{(0,\per);\LR{2}(\R^3)^{3\times 3}}.\label{RegLem_IniRegwvel}
\end{align}
It is easy to verify from \eqref{WeakSolClassDefDefofweaksol} that $\vvel$ is a generalized solution 
(sometimes  referred to as a \emph{D-solution}) to the steady-state Navier-Stokes problem
\begin{align}\label{RegLem_steadystateeq}
\begin{pdeq}
&-\Delta\vvel-\rey \partial_1\vvel+\grad\vpres + \nsnonlin{\vvel} = \proj f - \proj\bb{\nsnonlin{\wvel}} && \tin\ \R^3,\\
&\Div\vvel =0 && \tin\ \R^3,\\
&\lim_{\snorm{x}\ra\infty} \vvel= 0.
\end{pdeq}
\end{align}
Problem \eqref{RegLem_steadystateeq} has been studied extensively over the years. A comprehensive exposition can be found in \cite[Chapter IX]{galdi:book2}.
Although \cite[Chapter IX]{galdi:book2} deals with the exterior domain case, virtually all techniques found there can be used in the whole-space case as well.
A straightforward utilization of the integral Minkowski inequality in combination with \eqref{RegLem_IniRegwvel} yields 
$\proj\bb{\nsnonlin{\wvel}}\in\LR{1}(\R^3)^{3}\cap\LR{\frac{3}{2}}(\R^3)^{3}$. 
From \cite[Chapter IX]{galdi:book2} it thus follows that
\begin{align}\label{RegLem_vvelL3}
\vvel\in\LR{3}(\R^3)^3.
\end{align}
Now consider the linear operator
\begin{align*}
\Oseensolopr : \SR(\R^3)^3 \ra \TDR(\R^3)^3,\quad \Oseensolopr\psi := \iFT_{\R^3}\Bb{\frac{1}{\snorm{\xi}^2-i\rey\xi_1}\cdot\,\FT_{\R^3}\bb{\hproj\psi}}
\end{align*}
One may recognize $\Oseensolopr$ as the inverse to the Oseen operator. By a well-known application of Lizorkin's multiplier theorem, see 
\cite[Theorem 4.1]{galdi:book1}, one can show that $\Oseensolopr$ extends uniquely to a homeomorphism
\begin{align}\label{RegLem_Oseensolmapprop1}
\forall q\in(1,2),s\in(1,\infty):\quad \Oseensolopr: \LR{q}(\R^3)^3\cap\LR{s}(\R^3)^3\ra\xoseen{q,s}(\R^3).
\end{align}
Additionally, see \cite[Theorem 4.2]{galdi:book1}, the composition $\Oseensolopr\circ\Div$ extends to a continuous linear operator
\begin{align}\label{RegLem_Oseensolmapprop2}
\begin{aligned}
&\forall q\in(1,4):\quad \Oseensolopr\circ\Div: \LR{q}(\R^3)^{3\times 3}\ra\LR{\frac{4q}{4-q}}(\R^3)^3,\\
&\forall q\in(1,\infty):\quad \Oseensolopr\circ\Div: \LR{q}(\R^3)^{3\times 3}\ra\DSR{1}{q}(\R^3)^3.
\end{aligned}
\end{align}
We shall employ \eqref{RegLem_Oseensolmapprop1} and \eqref{RegLem_Oseensolmapprop2} iteratively to establish increased integrability of $\vvel$.
Observe that
\begin{align}\label{RegLem_RepOfvvel}
\begin{aligned}
\vvel &= \Oseensolopr\bb{\proj f - \proj\bb{\nsnonlin{\wvel}} - \nsnonlin{\vvel}}\\
&= \Oseensolopr\proj f - \Oseensolopr\circ\Div\proj\bb{\wvel\otimes\wvel} - \Oseensolopr\circ\Div\bb{\vvel\otimes\vvel}.
\end{aligned}
\end{align}
The first equality above follows by a standard uniqueness argument since both sides of the equation is a solution in $\LR{6}(\R^3)^3$ to the same Oseen system.
One may verify the second equality by a simple computation. It follows from \eqref{RegLem_IniRegwvel} that
$\proj\bb{\wvel\otimes\wvel}\in\LR{1}(\R^3)^{3\times 3}\cap\LR{3}(\R^3)^{3\times 3}$. By \eqref{RegLem_Oseensolmapprop1} and \eqref{RegLem_Oseensolmapprop2}
we thus have
\begin{align}\label{RegLem_rhsreg1}
\forall q\in(2,12]:\quad \Oseensolopr\proj f - \Oseensolopr\circ\Div\proj\bb{\wvel\otimes\wvel}\in\LR{q}(\R^3).
\end{align}
By \eqref{RegLem_IniRegvvel} and \eqref{RegLem_vvelL3} we have $\vvel\otimes\vvel\in\LR{\frac{3}{2}}(\R^3)^{3\times 3}\cap\LR{3}(\R^3)^{3\times 3}$, whence 
\eqref{RegLem_Oseensolmapprop2}, \eqref{RegLem_RepOfvvel} and \eqref{RegLem_rhsreg1} imply 
$\vvel\in\LR{\frac{12}{5}}(\R^3)^3\cap\LR{12}(\R^3)^3$. From this, $\vvel\otimes\vvel\in\LR{\frac{6}{5}}(\R^3)^{3\times 3}\cap\LR{6}(\R^3)^{3\times 3}$ follows.
Consequently, \eqref{RegLem_Oseensolmapprop2}, \eqref{RegLem_RepOfvvel} and \eqref{RegLem_rhsreg1} then yield
\begin{align}\label{RegLem_FinalRegvvel}
\forall q\in (2,12]:\quad \vvel\in\LR{q}(\R^3)^3.
\end{align}
We now turn our attention to the integrability of $\wvel$. 
It follows directly from the definitions \eqref{DefOfAnisotropicSobSpaceOnGrp} and \eqref{DefOsWSRperspace} that lifting by $\quotientmap$ is a homeomorphism between $\WSR{1,2}{q}(\grp)$ and $\WSRper{1,2}{q}\np{\RthreeR}$.
We may therefore consider $\wvel$ as a function defined on $\grp$. It is easy to verify from \eqref{WeakSolClassDefDefofweaksol} that $\wvel$ is
a solution in the sense of $\TDR(\grp)$-distributions to 
\begin{align}\label{RegLem_eqforwvel}
\begin{pdeq}
&\partial_t\wvel -\Delta\wvel -\rey\partial_1\wvel  = 
\projcompl\hproj f - \projcompl\hproj\bb{\nsnonlin{\wvel}+\nsnonlinb{\wvel}{\vvel}+\nsnonlinb{\vvel}{\wvel}} && \tin\grp,\\
&\Div\wvel =0 && \tin\grp.
\end{pdeq}
\end{align} 
Observe that the integrability of the right-hand side above is sufficient for the Helmholtz projection $\hproj$ hereof to be well-defined. In fact, due to 
the assumption that $\wvel$ satisfies \eqref{MainThm_condonsol} it is easy to verify that all terms on the right-hand side belong to some $\LR{q}(\grp)$ space.
Consequently, we can further employ Lemma \ref{Reformulation_OseensoloprcomplMappingProps} and Lemma \ref{Reformulation_UniquenessComplEq} to deduce
\begin{align}
\wvel 
&= \Oseensoloprcompl \hproj f - {\Oseensoloprcompl\hproj\nb{\nsnonlin{\wvel}}-
\Oseensoloprcompl\hproj\nb{\nsnonlinb{\wvel}{\vvel}}-\Oseensoloprcompl\hproj\nb{\nsnonlinb{\vvel}{\wvel}}}\label{RegLem_RepOfwvelPre}\\
&= \Oseensoloprcompl \hproj f - {\Oseensoloprcompl\circ\Div\hproj\nb{\wvel\otimes\wvel-\vvel\otimes\wvel-\wvel\otimes\vvel}}\label{RegLem_RepOfwvelPreDivForm}.
\end{align}
More precisely, due to \eqref{Reformulation_OseensoloprcomplMappingPropsProp1} the right-hand side in \eqref{RegLem_RepOfwvelPre} is well-defined as a solution
to \eqref{RegLem_eqforwvel} in $\projcompl\TDR(\grp)$. From Lemma \ref{Reformulation_UniquenessComplEq} we thus obtain the identity in \eqref{RegLem_RepOfwvelPre},
and in turn, by an easy calculation, \eqref{RegLem_RepOfwvelPreDivForm}.
For $q\in(1,10)$, the following implication follows from Lemma \ref{Reformulation_OseensoloprcomplMappingProps}:
\begin{align*}
w\in\LR{q}(\grp)^3 \Ra \hproj\nb{\wvel\otimes\wvel}\in\LR{\frac{q}{2}}(\grp)^{3\times 3} \Ra \Oseensoloprcompl\circ\Div\hproj\nb{\wvel\otimes\wvel}\in\LR{\frac{5q}{10-q}}(\grp).
\end{align*}
Recalling \eqref{RegLem_FinalRegvvel}, we also deduce for $q\in(1,60/7)$ validity of the implication
\begin{align*}
w\in\LR{q}(\grp)^3 \Ra \hproj\nb{\vvel\otimes\wvel}\in\LR{\frac{12q}{12+q}}(\grp)^{3\times 3} \Ra \Oseensoloprcompl\circ\Div\hproj\nb{\vvel\otimes\wvel}
\in\LR{\frac{60q}{60-7q}}(\grp).
\end{align*}
Employing Lemma \ref{Reformulation_OseensoloprcomplMappingProps} in a similar manner also for $q\in[60/7,\infty)$, we thus deduce from \eqref{RegLem_RepOfwvelPreDivForm} the implication
\begin{align}\label{RegLem_FundImplicationNew}
\forall q\in(1,\infty):\ \wvel\in\LR{q}(\grp)^3\ \Ra\ 
\begin{pdeq}
&\wvel\in\LR{\frac{5q}{10-q}}(\grp) && \tif q\in[5,6],\\
&\wvel\in\LR{\frac{60q}{60-7q}}(\grp) && \tif q\in[6,60/7),\\
&\forall s\in[q,\infty):\ \wvel \in \LR{s}(\grp) && \tif q=60/7,\\
&\wvel\in\LR{\infty}(\grp) && \tif q\in(60/7,\infty).
\end{pdeq}
\end{align}
By assumption \eqref{MainThm_condonsol}, $\wvel\in\LR{r}(\grp)^3$. Since $r>5$, iterating \eqref{RegLem_FundImplicationNew} with $q=r$ as starting point 
yields 
\begin{align}\label{RegLem_FinalLinftyRegwvel}
\wvel\in\LR{\infty}(\grp)^3.
\end{align}
With this improved integrability of $\wvel$, we return to \eqref{RegLem_RepOfvvel} and improve \eqref{RegLem_FinalRegvvel} 
to 
\begin{align}\label{RegLem_FinalFinalRegvvel}
\forall q\in (2,\infty):\quad \vvel\in\LR{q}(\R^3)^3.
\end{align}
From \eqref{RegLem_Oseensolmapprop2}, \eqref{RegLem_RepOfvvel}, \eqref{Reformulation_OseensoloprcomplMappingPropsProp2} and \eqref{RegLem_RepOfwvelPreDivForm}
it now follows that
\begin{align}\label{RegLem_FinalRegGradvvelGradwvel}
\forall q\in(1,\infty):\quad \grad\vvel\in\LR{q}(\R^3)^{3\times 3},\ \grad\wvel\in\LR{q}(\grp)^{3\times 3}.
\end{align}
Combining \eqref{RegLem_FinalLinftyRegwvel}, \eqref{RegLem_FinalFinalRegvvel}, \eqref{RegLem_FinalRegGradvvelGradwvel} and \eqref{RegLem_RepOfwvelPre}, we conclude
\eqref{RegLemRegOfprojcomplu} by Lemma \ref{Reformulation_OseensoloprcomplMappingProps}.
Similarly, recalling \eqref{RegLem_Oseensolmapprop2} and \eqref{RegLem_RepOfvvel}, we deduce \eqref{RegLemRegOfproju}.

The integrability we have established for $\vvel$ and $\wvel$ enables us to employ the Helmholtz-Weyl projection $\hproj$ in \eqref{intro_nspastbodywholespace} and
obtain the equation
\begin{align*}
\grad\upres = \bp{\id-\hproj}\bb{f-\nsnonlin{\wvel}-\nsnonlinb{\wvel}{\vvel}-\nsnonlinb{\vvel}{\wvel}-\nsnonlin{\vvel}}\quad\tin\grp
\end{align*}
for the pressure term $\upres$. Applying the Fourier transform $\FT_{\R^3}$, we thus formally arrive at the following expression for $\upres$:
\begin{align*}
\upres=\iFT_{\R^3}\Bb{\frac{\xi_j}{\snorm{\xi}^2}\cdot \FT_{\R^3}\bb{f-\nsnonlin{\wvel}-\nsnonlinb{\wvel}{\vvel}-\nsnonlinb{\vvel}{\wvel}-\nsnonlin{\vvel}}}.
\end{align*}
By well-known embedding properties of the Riesz potential (see for example \cite[Theorem 6.1.3]{Grafakos2}) and $\LR{p}$-boundedness of the Riesz operators,
we deduce from the integrability properties of $\vvel$ and $\wvel$ that the expression for $\upres$ is valid as an element of $\xpresper{q,s}\bp{\R\times\R^3}$
for any $q\in(1,3)$ and $s\in(1,\infty)$. This concludes \eqref{RegLemRegOfPres}. Finally, we obtain from \cite[Corollary 2.5]{ertpns} that both $\uvel$ and $\upres$
are smooth.
\end{proof} 

The smoothness of the solution $\uvel$ in Lemma \ref{RegLem} enables us to analyze it's pointwise behavior. We start by showing 
a decay estimate of $\grad\uvel$. The lemma is a minor modification of \cite[Lemma IX.8.2]{galdi:book2}.

\begin{lem}\label{ae_firstlemma}
Let $f\in\CRciper\bp{\R\times\R^3}^3$ and $(\uvel,\upres)$ be a $\per$-time-periodic solution to \eqref{intro_nspastbodywholespace} with 
\begin{align}\label{ae_firstlemmaCondOnSol}
\begin{aligned}
&\uvel\in\CR{2}\np{\R\times\R^3}^3,\quad\upres\in\CR{1}(\R\times\R^3),\\
&\grad\uvel\in\LR{2}\bp{\R\times\R^3}^{3\times 3}\cap\LR{\frac{3}{2}}\bp{\R\times\R^3}^{3\times 3},\\
&\forall s\in(2,3]:\ \uvel\in\LR{s}(\grp)^3,\quad\exists s'\in[3/2,2):\ \upres\in\LR{{s'}}(\grp).
\end{aligned}
\end{align}
Then
\begin{align}\label{ae_firstlemmaDecay}
\forall\epsilon>0:\quad \iper\int_0^\per\int_{\B^R} \snorm{\grad\uvel}^2\,\dx\dt \leq \Cc[ae_firstlemmaConst]{C}\,R^{-1+\epsilon}.
\end{align}
\end{lem}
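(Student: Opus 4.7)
The plan is to adapt the classical Cacciopoli-type argument of \cite[Lemma IX.8.2]{galdi:book2} to the time-periodic setting. The key observation is that $\per$-periodicity causes the time-derivative contribution to vanish entirely, so the argument reduces closely to the steady-state case.

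Concretely, I would fix a radial cutoff $\psi_R\in\CRi(\R^3)$ with $\psi_R\equiv 0$ on $\B_{R/2}$, $\psi_R\equiv 1$ on $\B^R$, and $\snorm{\grad^k\psi_R}\leq C R^{-k}$. Since $f$ has compact support, $f\psi_R\equiv 0$ for $R$ sufficiently large. I would then multiply the strong form of \eqref{intro_nspastbodywholespace} pointwise by $\psi_R^2\uvel$ and integrate over $(0,\per)\times\R^3$; the $\CR{2}$ regularity together with the integrability in \eqref{ae_firstlemmaCondOnSol} (after a secondary cutoff at large spatial radii, passed to the limit) justify all integrations by parts. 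The time-derivative term vanishes because of $\per$-periodicity:
\begin{align*}
\int_0^\per\int_{\R^3}\partial_t\uvel\cdot\uvel\,\psi_R^2\,\dx\,\dt=\tfrac{1}{2}\int_{\R^3}\psi_R^2\bp{\snorm{\uvel(\per,\cdot)}^2-\snorm{\uvel(0,\cdot)}^2}\dx=0.
\end{align*}

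After integrating by parts in the Laplacian, drift, pressure and convection terms (using $\Div\uvel=0$), the resulting identity equates $\iper\int_0^\per\int\snorm{\grad\uvel}^2\psi_R^2\,\dx\,\dt$ on the left with an integral over the annulus $A_R:=\B_R\setminus\B_{R/2}$ on the right, each summand carrying a factor of $\grad\psi_R=\bigo(R^{-1})$ (or $\Delta\psi_R=\bigo(R^{-2})$) times one of $\snorm{\uvel}^2$, $\upres\,\uvel$, or $\snorm{\uvel}^2\uvel$. Each contribution is then bounded by Hölder's inequality on $A_R$: the convection term $\frac{1}{R}\int_{A_R}\snorm{\uvel}^3$ is controlled using $\uvel\in\LR{3}(\grp)$ and yields $\bigo(R^{-1})$; the pressure term $\frac{1}{R}\int_{A_R}\snorm{\upres}\snorm{\uvel}$ uses Hölder-conjugate exponents with $\uvel\in\LR{s}(\grp)$ and $\upres\in\LR{s'}(\grp)$ and also yields $\bigo(R^{-1})$; the Laplacian correction is $\bigo(R^{-2+\epsilon})$.

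The main obstacle is the drift term $\frac{1}{R}\int_{A_R}\snorm{\uvel}^2$: applying Hölder with $\uvel\in\LR{2+\delta}(\grp)$ for small $\delta>0$ and $\snorm{A_R}\sim R^3$ yields $\bigo(R^{-1+3\delta/(2+\delta)})=\bigo(R^{-1+\epsilon})$ upon choosing $\delta$ small relative to $\epsilon$. This interpolation between $\LR{2+\delta}$ and the volume of $A_R$ is the unavoidable source of the arbitrarily small exponent loss. Combining the bounds and using $\psi_R\equiv 1$ on $\B^R$ gives the claim.
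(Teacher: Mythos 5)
Your proof is correct, and it takes a genuinely different route from the paper's. The paper multiplies \eqref{intro_nspastbodywholespace} by $\uvel$ and integrates over $\B_{R^*,R}\times[0,\per]$, producing surface integrals over $\partial\B_R$ and $\partial\B_{R^*}$; after sending $R^*\to\infty$ along a good sequence it obtains the \emph{identity} $\calg(R)=F(R)$ with $F(R)$ a surface flux over $\partial\B_R$, in which the drift contribution $\int_{\partial\B_R}\snorm{\uvel}^2\,\dS$ admits no useful pointwise bound. The paper therefore works indirectly: it shows $R\mapsto R^{-\epsilon}F(R)\in\LR{1}\bp{(1,\infty)}$ and combines this integrated estimate with the monotonicity $\calg'(R)\leq 0$ via \cite[Lemma IX.8.1]{galdi:book2} to recover the pointwise decay. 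Your cutoff formulation replaces the surface fluxes with annulus integrals over $A_R=\B_R\setminus\B_{R/2}$, each carrying an explicit power of $R^{-1}$ or $R^{-2}$ from $\grad\psi_R$, $\Delta\psi_R$; these can be bounded directly by H\"older against the $\LR{q}(\grp)$ norms of $\uvel$ and $\upres$ (using $\snorm{A_R}\sim R^3$ for the $\epsilon$-loss in the drift term), so you obtain a pointwise bound on $\calg(R)$ without any monotonicity argument or appeal to the auxiliary Galdi lemma. The price is a somewhat more delicate justification of the global integration by parts, since $\psi_R$ is not compactly supported, but your secondary-cutoff remark handles this in the standard way; the available $\LR{s}$ and $\LR{s'}$ integrability of $\uvel$, $\grad\uvel$, $\upres$ ensures the far-field boundary terms vanish in the limit. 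Both proofs use the $\per$-periodicity to kill the time-derivative contribution. Your approach is arguably more self-contained.
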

\begin{proof}\newCCtr[c]{ae_firstlemma}
The proof follows that of \cite[Lemma IX.8.2]{galdi:book2}. We put 
\begin{align*}
\calg(R):=\iper\int_0^\per\int_{\B^R} \snorm{\grad\uvel}^2\,\dx\dt = \iper\int_0^\per\int_R^\infty\int_{\partial\B_r} \snorm{\grad\uvel}^2\,\dS\dr\dt.
\end{align*}
By assumption $\grad\uvel\in\CR{}(\R\times\R^3)$, whence $\calg\in\CR{1}(0,\infty)$.
Multiplying both sides in \eqref{intro_nspastbodywholespace} by $\uvel$ and subsequently integrating over $\B_{R^*,R}\times[0,\per]$, we obtain
\begin{align}\label{ae_firstlemmaAfterTest}
\iper\int_0^\per\int_{\B_{R^*,R}} \snorm{\grad\uvel}^2\,\dx\dt = 
\iper\int_0^\per\int_{\partial\B_{R^*,R}} \frac{\rey}{2}\snorm{\uvel}^2 n_1 - \half\snorm{\uvel}^2\uvel\cdot n + \uvel\cdot\bp{\grad\uvel\cdot n}- \upres\,\uvel\cdot n\,\dS\dt. 
\end{align}
From the assumptions in \eqref{ae_firstlemmaCondOnSol}, it follows that the function
\begin{align}\label{ae_firstlemmainitialL1func}
r\ra\iper\int_0^\per\int_{\partial\B_{r}} \snorm{\uvel}^3 + \snormL{-\half\snorm{\uvel}^2\uvel\cdot n + \uvel\cdot\bp{\grad\uvel\cdot n} - \upres\,\uvel\cdot n}\,\dS\dt 
\end{align}
belongs to $\LR{1}\bp{(1,\infty)}$. Consequently, there is a sequence $\seqK{R_k}\subset(0,\infty)$ with $\lim_{k\ra\infty}R_k=\infty$
and 
\begin{align*}
\lim_{k\ra\infty} R_k\,\iper\int_0^\per\int_{\partial\B_{R_k}} \snorm{\uvel}^3 + \snormL{-\half\snorm{\uvel}^2\uvel\cdot n + \uvel\cdot\bp{\grad\uvel\cdot n}- \upres\,\uvel\cdot n}\,\dS\dt =0.
\end{align*}
Employing H\"older's inequality, we observe that 
\begin{align}\label{ae_firstlemmausquareest}
\iper\int_0^\per\int_{\partial\B_R} \snorm{\uvel}^2\,\dS\dt 
\leq \Cc{ae_firstlemma} \iper\int_0^\per \Bp{\int_{\partial\B_R} \snorm{\uvel}^3\,\dS}^{\frac{2}{3}} R^{\frac{2}{3}}\,\dt 
\leq \Cclast{ae_firstlemma}\, \Bp{ R \iper\int_0^\per\int_{\partial\B_R}\snorm{\uvel}^3\,\dS\dt}^{\frac{2}{3}}.
\end{align}
It follows that 
\begin{align*}
\lim_{k\ra\infty} \iper\int_0^\per\int_{\partial\B_{R_k}} \frac{\rey}{2}\snorm{\uvel}^2 n_1 - \half\snorm{\uvel}^2\uvel\cdot n + \uvel\cdot\bp{\grad\uvel\cdot n} - \upres\,\uvel\cdot n\,\dS\dt = 0. 
\end{align*}
Put
\begin{align*}
F(R):=\iper\int_0^\per\int_{\partial\B_{R}} \frac{\rey}{2}\snorm{\uvel}^2 n_1 - \half\snorm{\uvel}^2\uvel\cdot n + \uvel\cdot\bp{\grad\uvel\cdot n} - \upres\,\uvel\cdot n\,\dS\dt. 
\end{align*}
Choosing $R^*=R_k$ in \eqref{ae_firstlemmaAfterTest} and letting $k\ra\infty$, we see that
\begin{align}\label{ae_firstlemmacalgeqF}
\calg(R)=F(R).
\end{align}
By a similar estimate as in \eqref{ae_firstlemmausquareest}, we find for $q\in(1,\infty)$
\begin{align*}
\iper\int_0^\per\int_{\partial\B_R} \snorm{\uvel}^2\,\dS\dt \leq \Cc{ae_firstlemma}\,R^{\frac{2(q-1)}{q}}  
\Bp{\iper\int_0^\per\int_{\partial\B_R}\snorm{\uvel}^{2q}\,\dS\dt}^\frac{1}{q}.
\end{align*}
Employing Young's inequality, we then deduce
\begin{align}\label{ae_firstlemmausquareRepsest}
R^{-\epsilon} \iper\int_0^\per\int_{\partial\B_R} \snorm{\uvel}^2\,\dS\dt 
\leq \Cc{ae_firstlemma}\Bp{ R^{2-\epsilon\frac{q}{q-1}} +  
\iper\int_0^\per\int_{\partial\B_R}\snorm{\uvel}^{2q}\,\dS\dt}.
\end{align}
Now fix $q\in (1,\frac{3}{3-\epsilon})$. Then $2-\epsilon\frac{q}{q-1}<-1$.
Moreover, by assumption \eqref{ae_firstlemmaCondOnSol} we have $\uvel\in\LR{2q}\bp{\R\times\R^3}^3$.
Thus
\begin{align*}
R\ra R^{-\epsilon} \iper\int_0^\per\int_{\partial\B_R} \snorm{\uvel}^2\,\dS\dt \in\LR{1}\bp{(1,\infty)}.
\end{align*}
This information together with the summability of the function in \eqref{ae_firstlemmainitialL1func} implies
that $R\ra R^{-\epsilon}F(R)\in\LR{1}\bp{(1,\infty)}$.
Recalling \eqref{ae_firstlemmacalgeqF}, we deduce
\begin{align}\label{ae_firstlemmaFinalSummability}
R\ra R^{-\epsilon}\calg(R)\in\LR{1}\bp{(1,\infty)}.
\end{align} 
Combining \eqref{ae_firstlemmaFinalSummability} with the fact that
\begin{align*}
\calg'(R) = - \iper\int_0^\per\int_{\partial\B_R} \snorm{\grad\uvel}^2\,\dS\dt \leq 0,
\end{align*}
we finally conclude, by \cite[Lemma IX.8.1]{galdi:book2}, that
\begin{align*}
\calg(R)R^{1-\epsilon}\leq \Cc{ae_firstlemma},
\end{align*}
which yields \eqref{ae_firstlemmaDecay}
\end{proof}

We observe in the following corollary that the decay property established for $\uvel$ in Lemma \ref{ae_firstlemma} also holds
for $\proj\uvel$ and $\projcompl\uvel$ separately. 
\begin{cor}\label{ae_firstcor}
Under the same assumptions as in Lemma \ref{ae_firstlemma}, it holds that 
\begin{align}\label{ae_firstcorDecay}
\forall\epsilon>0:\quad \iper\int_0^\per\int_{\B^R} \snorm{\grad\projcompl\uvel}^2\,\dx\dt + \int_{\B^R} \snorm{\grad\proj\uvel}^2\,\dx \leq \Cc[ae_firstcorConst]{C}\,R^{-1+\epsilon}.
\end{align}
\end{cor}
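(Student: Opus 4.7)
The proof is essentially an orthogonal decomposition argument. The plan is to exploit the fact that $\proj$ and $\projcompl$ are complementary orthogonal projections in $L^2\bp{(0,\per)}$ (with respect to the normalized measure $\iper\dt$) and that both commute with the spatial gradient $\grad$. This will allow me to split the $L^2$ bound from Lemma \ref{ae_firstlemma} into two nonnegative pieces that separately control $\grad\proj\uvel$ and $\grad\projcompl\uvel$.

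First, I would observe that since $\proj\uvel(x) = \iper\int_0^\per \uvel(s,x)\,\ds$ is independent of $t$, spatial differentiation passes through the projection: $\grad\proj\uvel = \proj\grad\uvel$, and consequently $\grad\projcompl\uvel = \projcompl\grad\uvel$. Next, by the very definition \eqref{intro_defofprojGernericExpression} of the two projections, any sufficiently regular $\per$-time-periodic function $h(t,x)$ satisfies the pointwise (in $x$) orthogonality identity
\begin{align*}
\iper\int_0^\per\snorm{h(t,x)}^2\,\dt = \snorm{\proj h(x)}^2 + \iper\int_0^\per\snorm{\projcompl h(t,x)}^2\,\dt,
\end{align*}
since $\iper\int_0^\per \proj h(x)\cdot \projcompl h(t,x)\,\dt = \proj h(x)\cdot\iper\int_0^\per\projcompl h(t,x)\,\dt = 0$.

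Applying this identity with $h=\grad\uvel$ (note that $\grad\uvel\in\CR{}(\R\times\R^3)$ by the regularity assumptions of Lemma \ref{ae_firstlemma}, so all integrals are classical), I integrate over $\B^R$ to obtain
\begin{align*}
\iper\int_0^\per\int_{\B^R}\snorm{\grad\uvel}^2\,\dx\dt = \int_{\B^R}\snorm{\grad\proj\uvel}^2\,\dx + \iper\int_0^\per\int_{\B^R}\snorm{\grad\projcompl\uvel}^2\,\dx\dt.
\end{align*}
Since both terms on the right-hand side are nonnegative, each is bounded by the left-hand side, and the conclusion \eqref{ae_firstcorDecay} now follows directly from the estimate \eqref{ae_firstlemmaDecay} of Lemma \ref{ae_firstlemma}.

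There is no real obstacle here; the argument is just the orthogonality of the time-mean/zero-mean decomposition. The only thing to double-check is that Fubini is applicable in interchanging the $t$-integral with the $x$-integral, which is immediate from the regularity and integrability of $\grad\uvel$ assumed in \eqref{ae_firstlemmaCondOnSol}.
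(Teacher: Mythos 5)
Your proof is correct and follows essentially the same route as the paper: expand $\snorm{\grad\uvel}^2 = \snorm{\grad\proj\uvel + \grad\projcompl\uvel}^2$, observe that the cross term vanishes because $\projcompl\grad\uvel$ has zero time-average, and then invoke Lemma~\ref{ae_firstlemma}. The paper presents the same orthogonality computation a bit more tersely (without explicitly remarking that $\grad$ commutes with the projections), but the argument is the same.
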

\begin{proof}
We simply observe that
\begin{align*}
\begin{aligned}
&\iper\int_0^\per\int_{\B^R} \snorm{\grad\uvel}^2\,\dx\dt \\
&\qquad = 
\iper\int_0^\per\int_{\B^R} \snorm{\grad\projcompl\uvel}^2\,\dx\dt +
\int_{\B^R} \snorm{\grad\proj\uvel}^2\,\dx + 
2\iper\int_0^\per\int_{\B^R} \grad\proj\uvel:\grad\projcompl\uvel\,\dx\dt
\end{aligned}
\end{align*}
and 
\begin{align*}
\iper\int_0^\per\int_{\B^R} \grad\proj\uvel:\grad\projcompl\uvel\,\dx\dt=
\int_{\B^R} \grad\proj\uvel:\Bp{\iper\int_0^\per\grad\projcompl\uvel\,\dt}\,\dx = 0.
\end{align*}
\end{proof}

Next, we recall some well-known properties of the Oseen fundamental solution $\fundsoloseen$.
\begin{lem}\label{ae_oseenfundsolprops}
Let $\rey\neq 0$. The Oseen fundamental solution $\fundsoloseen$ satisfies
\begin{align}
&\forall \snorm{x}>0:\quad \snorml{\fundsoloseen(x)}\leq\Cc{C}\,{\snorm{x}^{-1}}, \label{ae_fundsoloseenpdecay}\\
&\forall \snorm{x}>1:\quad\snorml{\grad\fundsoloseen(x)}\leq \Cc{C}\,{\snorm{x}^{-\frac{3}{2}}} \label{ae_fundsoloseengradpdecay},\\
&\forall r>0:\quad\int_{\partial\B_r}\snorml{\grad\fundsoloseen(x)}\,\dS\leq \Cc{C}\,r^{-\half}\label{ae_fundsoloseenintgradpdecay}.
\end{align}
Moreover, $\fundsoloseen$ enjoys for any $r>0$ the summability properties
\begin{align}
&\forall q\in(2,\infty): &&\fundsoloseen\in\LR{q}(\R^3\setminus\B_r)^{3\times 3},\label{ae_oseenfundsolprops_SummabilityFundSol}\\
&\forall q\in [1,3):&& \fundsoloseen\in\LRloc{q}(\R^3)^{3\times 3},\label{ae_oseenfundsolprops_LocSummabilityFundSol}\\
&\forall q\in(4/3,\infty):&& \partial_j\fundsoloseen\in\LR{q}(\R^3\setminus\B_r)^{3\times 3}\quad(j=1,2,3),\label{ae_oseenfundsolprops_SummabilityFundSolGrad}\\
&\forall q\in [1,3/2):&& \partial_j\fundsoloseen\in\LRloc{q}(\R^3)^{3\times 3}\quad(j=1,2,3).\label{ae_oseenfundsolprops_LocSummabilityFundSolGrad}
\end{align}   
\end{lem}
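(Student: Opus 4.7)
My plan is to reduce the lemma to the classical anisotropic pointwise bounds for the Oseen fundamental solution and its gradient, recorded in \cite[Chapter VII.3]{galdi:book1}. Introducing the wake distance $s(x):=\snorm{x}-x_1\geq 0$, these bounds take the form
\begin{equation*}
\snorml{\fundsoloseen(x)}\leq \frac{C}{\snorm{x}\bp{1+\snorm{\rey}\,s(x)}},\qquad \snorml{\grad\fundsoloseen(x)}\leq \frac{C}{\snorm{x}^{3/2}\bp{1+\snorm{\rey}\,s(x)}^{3/2}}.
\end{equation*}
The uniform estimates \eqref{ae_fundsoloseenpdecay} and \eqref{ae_fundsoloseengradpdecay} are then immediate since $s(x)\geq 0$.

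For the surface estimate \eqref{ae_fundsoloseenintgradpdecay}, I would parametrize $\partial\B_r$ in spherical coordinates $x_1=r\cos\phi$, so that $s(x)=r(1-\cos\phi)$ and $\dS=r^2\sin\phi\,\darg{\phi}\,\darg{\theta}$. The substitution $u=1+\snorm{\rey}\,r(1-\cos\phi)$ converts $\int_{\partial\B_r}\bp{1+\snorm{\rey}\,s(x)}^{-3/2}\,\dS$ into an elementary one-dimensional integral bounded uniformly in $r$ by $Cr$, so multiplication by the $\snorm{x}^{-3/2}$ prefactor in the gradient bound produces the claimed $r^{-1/2}$ rate.

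For the summability statements I plan a dyadic decomposition of $\R^3\setminus\B_r$ into annuli $A_R:=\setc{x}{R<\snorm{x}<2R}$, splitting each $A_R$ into its intersection with the parabolic wake $\setc{x}{\snorm{\rey}\,s(x)\leq 1}$ and the complement. The key geometric observation is that $A_R$ intersected with the wake has three-dimensional Lebesgue measure of order $R^2$ (length $\sim R$ along the $x_1$-axis, transverse radius $\sim R^{1/2}$). On this set $\snorml{\fundsoloseen}\sim R^{-1}$ and $\snorml{\grad\fundsoloseen}\sim R^{-3/2}$, so the $q$-th power integrals over the wake portion of $A_R$ are of order $R^{2-q}$ and $R^{2-3q/2}$ respectively, which are dyadically summable precisely when $q>2$ and $q>4/3$; this gives \eqref{ae_oseenfundsolprops_SummabilityFundSol} and \eqref{ae_oseenfundsolprops_SummabilityFundSolGrad}. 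On the complement of the wake the extra weight provides enough additional decay to impose only strictly weaker restrictions. The local integrabilities \eqref{ae_oseenfundsolprops_LocSummabilityFundSol} and \eqref{ae_oseenfundsolprops_LocSummabilityFundSolGrad} reduce to the behavior near the origin, where $\fundsoloseen$ has the same $\snorm{x}^{-1}$ Calder\'on--Zygmund singularity as the Laplace kernel and $\grad\fundsoloseen$ a $\snorm{x}^{-2}$ singularity, yielding the stated ranges by direct integration in spherical coordinates.

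The only point requiring genuine care is the dimensional bookkeeping of the wake: correctly identifying that $A_R\cap\set{\snorm{\rey}\,s(x)\leq 1}$ has volume of order $R^2$ rather than the naive $R^3$, and that its intersection with $\partial\B_r$ is a spherical cap of area $\sim r$ rather than $r^2$. Once this is in hand everything else is routine, and in fact each assertion of the lemma is already contained in the discussion of the Oseen fundamental solution in \cite[Chapter VII.3]{galdi:book1}; the proof essentially amounts to harvesting and compiling those classical estimates.
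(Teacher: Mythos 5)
Your argument is correct in substance but takes a more derivational route than the paper, which simply cites \cite[Chapter~VII: (3.24), (3.28), (3.32), (3.33) and Exercise~VII.3.1]{galdi:book1} for the pointwise and $L^q$ properties and notes that the local summability statements are consequences thereof. You instead begin from the anisotropic pointwise bounds and reconstruct \eqref{ae_fundsoloseenintgradpdecay}, \eqref{ae_oseenfundsolprops_SummabilityFundSol} and \eqref{ae_oseenfundsolprops_SummabilityFundSolGrad} via a dyadic decomposition that makes the wake geometry explicit (cross-section $\sim R^{1/2}$, annular volume $\sim R^2$, spherical cap area $\sim r$). That is a perfectly legitimate --- indeed more self-contained --- approach, and the dimensional bookkeeping is right: $\int_{A_R}\snorm{\fundsoloseen}^q\sim R^{2-q}$ and $\int_{A_R}\snorm{\grad\fundsoloseen}^q\sim R^{2-3q/2}$, giving summability precisely for $q>2$ and $q>4/3$.

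One point you should make explicit: the anisotropic bound $\snorm{\grad\fundsoloseen(x)}\lesssim\snorm{x}^{-3/2}(1+\snorm{\rey}s(x))^{-3/2}$ is valid only for $\snorm{x}$ bounded away from the origin; near $x=0$ the gradient has the Stokes-type $\snorm{x}^{-2}$ singularity, which is strictly worse than $\snorm{x}^{-3/2}$. You invoke the $\snorm{x}^{-2}$ singularity for the local integrability \eqref{ae_oseenfundsolprops_LocSummabilityFundSolGrad}, but your derivation of the surface estimate \eqref{ae_fundsoloseenintgradpdecay}, which is claimed for \emph{all} $r>0$, uses only the anisotropic bound and so does not cover small $r$. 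The fix is trivial --- for $r\leq1$ the $\snorm{x}^{-2}$ singularity gives $\int_{\partial\B_r}\snorm{\grad\fundsoloseen}\,\dS=\bigo(1)\leq Cr^{-1/2}$ --- but it should be stated, since otherwise the argument as written silently applies a far-field bound at the origin. The analogous caveat applies to the blanket statement $\snorm{\grad\fundsoloseen(x)}\leq C\snorm{x}^{-3/2}(1+\snorm{\rey}s(x))^{-3/2}$ in your opening display: this should be restricted to $\snorm{x}\geq1$, consistent with the restriction $\snorm{x}>1$ appearing in \eqref{ae_fundsoloseengradpdecay}.

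Worth noting also that in the paper's phrasing the gradient local summability is deduced ``from \eqref{ae_fundsoloseenintgradpdecay}''; strictly that only gives $\LR{1}_{\mathrm{loc}}$, and the full range $q<3/2$ requires the pointwise $\snorm{x}^{-2}$ singularity, exactly as you use. In that respect your derivation is actually more transparent.
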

\begin{proof}
We refer to \cite[Chapter VII: (3.24), (3.28), (3.32), (3.33)]{galdi:book1} for 
\eqref{ae_fundsoloseenpdecay}, 
\eqref{ae_oseenfundsolprops_SummabilityFundSol},
\eqref{ae_fundsoloseengradpdecay}, and
\eqref{ae_oseenfundsolprops_SummabilityFundSolGrad}.
Estimate \eqref{ae_fundsoloseenintgradpdecay} follows from \cite[Exercise VII.3.1]{galdi:book1}. Finally, 
\eqref{ae_oseenfundsolprops_LocSummabilityFundSol} and \eqref{ae_oseenfundsolprops_LocSummabilityFundSolGrad} are direct consequences of 
\eqref{ae_fundsoloseenpdecay} and \eqref{ae_fundsoloseenintgradpdecay}, respectively. 
\end{proof}

We are now in a position to prove the main theorem.

\begin{proof}[Proof of Theorem \ref{MainThm}]\newCCtr[c]{ae_asympexpansionthm}

We start by associating to the weak solution $\uvel$ the pressure term $\upres$ from Lemma \ref{RegLem}. By Lemma \ref{RegLem},
$(\uvel,\upres)$ is then a smooth strong solution. Additionally, the 
integrability properties \eqref{RegLemRegOfprojcomplu}--\eqref{RegLemRegOfPres} obtained in Lemma \ref{RegLem} ensure that 
$(\uvel,\upres)$ satisfies \eqref{ae_firstlemmaCondOnSol}. We can therefore utilize Lemma \ref{ae_firstlemma} and Corollary \ref{ae_firstcor}. 
Further observe that since $\fundsoloseen$ is smooth away from the origin,
it is enough to show the estimate in \eqref{MainThm_estofrestterm} for large $\snorm{x}$.  
Consider therefore an arbitrary $x\in\R^3$ with $\snorm{x}>2$. Let $R:=\frac{\snorm{x}}{2}$.
Put $\vvel:=\proj\uvel$ and $\wvel:=\projcompl\uvel$.
We will again take advantage of the fact that $\wvel$ satisfies \eqref{RegLem_eqforwvel} and, as in the proof of Lemma \ref{RegLem},
use Lemma \ref{Reformulation_OseensoloprcomplMappingProps} and Lemma \ref{Reformulation_UniquenessComplEq} to deduce \eqref{RegLem_RepOfwvelPre}.
By Lemma \ref{ae_integrabilityoffundsolcompl}, the componentwise convolution $\fundsolvelcompl*\psi$
is well-defined in a classical sense for any $\psi\in\LR{q}(\grp)^3$, $q\in[1,\infty)$. Since clearly $\Oseensoloprcompl\hproj\psi=\fundsolvelcompl*\psi$, we thus 
have
\begin{align}\label{ae_asympexpansionthmrepofwvel}
\wvel = \fundsolvelcompl*\Bp{f-\nsnonlin{\wvel}-\nsnonlinb{\wvel}{\vvel}-\nsnonlinb{\vvel}{\wvel}}.
\end{align}
Put
\begin{align}\label{ae_asympexpansionthmSplitofwvel}
\begin{aligned}
&I_1(t,x):=\fundsolvelcompl*\bb{\nsnonlin{\wvel}}(t,x),\\
&I_2(t,x):=\fundsolvelcompl*\bb{\nsnonlinb{\wvel}{\vvel}}(t,x),\\
&I_3(t,x):=\fundsolvelcompl*\bb{\nsnonlinb{\vvel}{\wvel}}(t,x),\\
&I_4(t,x):=\fundsolvelcompl* f (t,x).
\end{aligned}
\end{align} 
We shall give a pointwise estimate of $I_1,I_2,I_3$, and $I_4$. We first split 
\begin{align*}
\begin{aligned}
I_1(t,x)&=\iper\int_0^\per\int_{\B_R}\fundsolvelcompl(x-y,t-s)\,\bb{\nsnonlin{\wvel}}(y,s)\,\dy\ds\\
&\quad +\iper\int_0^\per\int_{\B^R}\fundsolvelcompl(x-y,t-s)\,\bb{\nsnonlin{\wvel}}(y,s)\,\dy\ds\\
&=: I_{11} + I_{12}.
\end{aligned}
\end{align*}
Employing H\"older's inequality and recalling \eqref{ae_estoffundsolcomplest}, we deduce 
\begin{align*}
\begin{aligned}
\snorm{I_{11}}&\leq  \int_{\B_R} \Bp{\iper\int_0^\per\snorml{\fundsolvelcompl(x-y,s)}^2\,\ds}^\half \Bp{\iper\int_0^\per\snorml{\nsnonlin{\wvel}}^2\,\ds}^\half \,\dy\\
&\leq \Cc{ae_asympexpansionthm} \frac{1}{R^3}\, \Bp{\int_{\B_R} 1\,\dy}^\half \Bp{\iper\int_0^\per\int_{\B_R}\snorml{\nsnonlin{\wvel}}^2\,\dy\ds}^\half\\
&\leq \Cc{ae_asympexpansionthm} R^{-\frac{3}{2}},
\end{aligned}
\end{align*}
where we in the last inequality use that $\nsnonlin{\wvel}\in\LR{2}(\grp)^3$, which is a direct consequence of Lemma \ref{RegLem}. 
Consider $\alpha\in(1,\frac{5}{3})$ and let $\alpha'$ denote the corresponding H\"older conjugate.
Recalling Corollary \ref{ae_firstcor}, we find that  
\begin{align*}%\label{ae_asympexpansionthmEstOfI12a}
\begin{aligned}
\snorm{I_{12}}&\leq \Bp{\iper\int_0^\per\int_{\B^R}\snorml{\fundsolvelcompl(x-y,s)}^\alpha\,\dy\ds}^{\frac{1}{\alpha}}
\Bp{\iper\int_0^\per\int_{\B^R}\snorm{\wvel}^{\alpha'}\snorm{\grad\wvel}^{\alpha'}\,\dy\ds}^{\frac{1}{\alpha'}}\\
&\leq \Cc{ae_asympexpansionthm}\, \norm{\fundsolvelcompl}_\alpha\, \norm{\wvel}_{\LR{\infty}\np{\Torus\times\B^R}}\, \norm{\grad\wvel}_{\infty}^{\frac{\alpha'-2}{\alpha'}}
\,\Bp{\iper\int_0^\per\int_{\B^R}\snorm{\grad\wvel}^{2}\,\dy\ds}^{\frac{1}{\alpha'}}\\
&\leq \Cc{ae_asympexpansionthm}\, \norm{\fundsolvelcompl}_\alpha\, \norm{\wvel}_{\LR{\infty}\np{\Torus\times\B^R}}\, \norm{\grad\wvel}_{\infty}^{\frac{2}{\alpha}-1}
\,R^{\np{-1+\epsilon}\frac{\alpha-1}{\alpha}}
\end{aligned}
\end{align*}
for all $\epsilon>0$.
By Sobolev embedding, see \cite[Theorem 4.1]{tpfpb}, $\grad\wvel\in\LR{\infty}(\grp)$. Recalling \eqref{ae_integrabilityoffundsolcomplsummability} and letting $\alpha\ra\frac{5}{3}$, we thus obtain 
\begin{align*}
\begin{aligned}
\forall\epsilon>0:\quad \snorm{I_{12}}&\leq \Cc{ae_asympexpansionthm}\, R^{-\frac{2}{5}+\epsilon}\, \norm{\wvel}_{\LR{\infty}\np{\Torus\times\B^R}}.
\end{aligned}
\end{align*}
We thus conclude
\begin{align}\label{ae_asympexpansionthmestofI1final}
\begin{aligned}
\forall\epsilon>0:\quad \snorm{I_{1}}&\leq \Cc{ae_asympexpansionthm}\, \bp{R^{-\frac{3}{2}}+R^{-\frac{2}{5}+\epsilon}\, \norm{\wvel}_{\LR{\infty}\np{\Torus\times\B^R}}}.
\end{aligned}
\end{align}
Similarly, we estimate
\begin{align}
&\forall\epsilon>0:\quad \snorm{I_{2}}\leq \Cc{ae_asympexpansionthm}\, \bp{R^{-\frac{3}{2}}+R^{-\frac{2}{5}+\epsilon}\, \norm{\wvel}_{\LR{\infty}\np{\Torus\times\B^R}}}\label{ae_asympexpansionthmestofI2final},\\
&\forall\epsilon>0:\quad \snorm{I_{3}}\leq \Cc{ae_asympexpansionthm}\, \bp{R^{-\frac{3}{2}}+R^{-\frac{2}{5}+\epsilon}\, \norm{\vvel}_{\LR{\infty}\np{\B^R}}}\label{ae_asympexpansionthmestofI3final}.
\end{align}
Due to \eqref{ae_estoffundsolcomplest} and the fact that $\supp f$ is compact, we deduce
\begin{align}\label{ae_asympexpansionthmestofI4final}
\begin{aligned}
\snorm{I_4}\leq \Cc{ae_asympexpansionthm} \int_{\supp f} \Bp{\iper\int_0^\per\snorml{\fundsolvelcompl(x-y,s)}^2\,\ds}^\half \,\dy
\leq \Cc{ae_asympexpansionthm} R^{-3}
\end{aligned}
\end{align}
for $R$ sufficiently large. 
It follows from \eqref{ae_asympexpansionthmrepofwvel} and \eqref{ae_asympexpansionthmestofI1final}-\eqref{ae_asympexpansionthmestofI4final} that 
\begin{align}\label{ae_asympexpansionthmestofwvela}
\forall\epsilon>0:\quad \snorm{\wvel(t,x)}\leq \Cc{ae_asympexpansionthm}\, 
\bp{R^{-\frac{3}{2}}+R^{-\frac{2}{5}+\epsilon}
(\norm{\vvel}_{\LR{\infty}\np{\B^R}} +\norm{\wvel}_{\LR{\infty}\np{\Torus\times\B^R}}) }.
\end{align}
We now turn our attention to $\vvel$. 
To establish a pointwise estimate of $\vvel$, we follow essentially the proof of \cite[Theorem IX.8.1]{galdi:book2}.
Recall that $\vvel$ satisfies \eqref{RegLem_steadystateeq}.
We claim that 
\begin{align}\label{ae_asympexpansionthmRepofvvel}
\vvel=\fundsoloseen*\bp{\proj f - \proj\bb{\nsnonlin{\wvel}} -\nsnonlin{\vvel} }.
\end{align}
The summability properties \eqref{ae_oseenfundsolprops_SummabilityFundSol} and \eqref{ae_oseenfundsolprops_LocSummabilityFundSol} of the 
Oseen fundamental solution $\fundsoloseen$ combined with the summability properties obtained for $\vvel$ and $\wvel$ in Lemma \ref{RegLem} implies that 
the convolution on the right-hand side above is well-defined in a classical sense.
Since both sides in \eqref{ae_asympexpansionthmRepofvvel} satisfy \eqref{RegLem_steadystateeq}, a standard uniqueness argument yields the identity.
Recalling \eqref{ae_fundsoloseenpdecay}
and that $f$ has compact support,
we see for $R$ sufficiently large that
\begin{align*}
\snorml{\fundsoloseen*\proj f(x)}  \leq \Cc{ae_asympexpansionthm} \int_{\supp \proj f} \frac{1}{\snorm{x-y}} \,\dy 
\leq \Cc{ae_asympexpansionthm} R^{-1}. 
\end{align*}
Moreover, we can estimate
\begin{align*}
\begin{aligned}
\snorml{\fundsoloseen*\proj\bb{\nsnonlin{\wvel}}(x)}
&=\snormL{\int_{\R^3} \fundsoloseen(x-y)\, \iper\int_0^\per \nsnonlin{\wvel(y,t)}\,\dt\,\dy}\\
&\leq \iper\int_0^\per\int_{\B_R} \frac{\Cc{ae_asympexpansionthm}}{R} \snorm{\nsnonlin{\wvel}}\,\dy\dt +
\iper\int_0^\per\int_{\B^R} \frac{\snorm{\wvel(y,t)}}{\snorm{x-y}} \snorm{\grad\wvel(y,t)}\,\dy\dt\\
&\leq \Cc{ae_asympexpansionthm} R^{-1} +
\iper\int_0^\per \Bp{\int_{\B^R} \frac{\snorm{\wvel(y,t)}^2}{\snorm{x-y}^2}\,\dy}^\half \Bp{\int_{\B^R}\snorm{\grad\wvel(y,t)}^2\,\dy}^\half\dt,
\end{aligned}
\end{align*}
where we use in the last estimate that $\nsnonlin{\wvel}\in\LR{1}(\grp)$.
We can use the Hardy-type inequality \cite[Theorem II.5.1]{galdi:book2} to increase the remaining integral on the right-hand side above by
\begin{align*}
\begin{aligned}
\iper\int_0^\per \Bp{\int_{\B^R} \frac{\snorm{\wvel(y,t)}^2}{\snorm{x-y}^2}\,\dy}^\half \Bp{\int_{\B^R}\snorm{\grad\wvel(y,t)}^2\,\dy}^\half\dt 
\leq \Cc{ae_asympexpansionthm}\iper\int_0^\per\int_{\B^R}\snorm{\grad\wvel(y,t)}^2\,\dy\dt, 
\end{aligned}
\end{align*}
whence by Corollary \ref{ae_firstcor} we obtain
\begin{align*}
\begin{aligned}
\forall \epsilon>0:\quad \snorml{\fundsoloseen*\proj\bb{\nsnonlin{\wvel}}(x)} \leq \Cc{ae_asympexpansionthm} R^{-1+\epsilon}. 
\end{aligned}
\end{align*}
In a similar manner we show
\begin{align*}
\begin{aligned}
\forall \epsilon>0:\quad \snorml{\fundsoloseen*\proj\bb{\nsnonlin{\vvel}}(x)} \leq \Cc{ae_asympexpansionthm} R^{-1+\epsilon}. 
\end{aligned}
\end{align*}
We conclude that 
\begin{align}
\begin{aligned}\label{ae_asympexpansionthmestofvvela}
\forall \epsilon>0:\quad \snorm{\vvel(x)} \leq \Cc{ae_asympexpansionthm} R^{-1+\epsilon}. 
\end{aligned}
\end{align}
Inserting \eqref{ae_asympexpansionthmestofvvela} into \eqref{ae_asympexpansionthmestofwvela}, we then find
\begin{align}\label{ae_asympexpansionthmEstwvelBeforeIteration}
\forall\epsilon>0:\quad \snorm{\wvel(t,x)}\leq \Cc{ae_asympexpansionthm}\, 
\bp{ R^{-\frac{2}{5}+\epsilon}\norm{\wvel}_{\LR{\infty}\np{\Torus\times\B^R}} + R^{-\frac{7}{5}+\epsilon}}.
\end{align}
By Lemma \ref{RegLem}, recall \eqref{RegLem_FinalLinftyRegwvel}, $\wvel\in\LR{\infty}(\grp)$.
We can thus iterate \eqref{ae_asympexpansionthmEstwvelBeforeIteration} a sufficient number of times to obtain
\begin{align}\label{ae_asympexpansionthmestofwvele}
\forall\epsilon>0:\quad \snorm{\wvel(t,x)}\leq \Cc{ae_asympexpansionthm}\, R^{-\frac{7}{5}+\epsilon}.
\end{align}
We now return to the representation formula \eqref{ae_asympexpansionthmrepofwvel} of $\wvel$. We shall utilize \eqref{ae_asympexpansionthmestofvvela}
and \eqref{ae_asympexpansionthmestofwvele} to extract even better decay estimates for $\wvel$. For this purpose, we recall \eqref{ae_asympexpansionthmSplitofwvel}
and observe that the integrability of $\vvel$, $\wvel$ and $\fundsolvelcompl$ established in Lemma \ref{RegLem} and Lemma \ref{ae_integrabilityoffundsolcompl},
respectively, allows us to integrate by parts and obtain
\begin{align}\label{ae_asympexpansionthmSplitofwvel_AfterIntByParts}
\begin{aligned}
&\bb{I_1(t,x)}_i=\partial_k\bb{\fundsolvelcompl}_{ij}*\bb{\wvel_j\wvel_k}(t,x),\\
&\bb{I_2(t,x)}_i=\partial_k\bb{\fundsolvelcompl}_{ij}*\bb{\vvel_j\wvel_k}(t,x),\\
&\bb{I_1(t,x)}_i=\partial_k\bb{\fundsolvelcompl}_{ij}*\bb{\wvel_j\vvel_k}(t,x).
\end{aligned}
\end{align} 
We again decompose
\begin{align*}
\begin{aligned}
\bb{I_1(t,x)}_i&=\iper\int_0^\per\int_{\B_R}\partial_k\bb{\fundsolvelcompl}_{ij}(x-y,t-s)\,\bb{\wvel_j\wvel_k}(y,s)\,\dy\ds\\
&\quad +\iper\int_0^\per\int_{\B^R}\partial_k\bb{\fundsolvelcompl}_{ij}(x-y,t-s)\,\bb{\wvel_j\wvel_k}(y,s)\,\dy\ds\\
&=: \tI_{11} + \tI_{12}.
\end{aligned}
\end{align*}
Employing H\"older's inequality and recalling \eqref{ae_estoffundsolcomplgradest}, we deduce 
\begin{align*}
\begin{aligned}
\snorm{\tI_{11}}&\leq  \int_{\B_R} \Bp{\iper\int_0^\per\snorml{\partial_k\bb{\fundsolvelcompl}_{ij}(x-y,s)}^2\,\ds}^\half \Bp{\iper\int_0^\per\snorm{\wvel}^4\,\ds}^\half \,\dy\\
&\leq \Cc{ae_asympexpansionthm} \frac{1}{R^4}\, \Bp{\int_{\B_R} 1\,\dy}^\half \norm{\wvel}_4^2 \leq \Cc{ae_asympexpansionthm} R^{-\frac{5}{2}},
\end{aligned}
\end{align*}
where we in the last inequality use that ${\wvel}\in\LR{4}(\grp)$, which is a direct consequence of Lemma \ref{RegLem}.
We recall from Lemma \ref{ae_integrabilityoffundsolcompl} that $\partial_k\fundsolvelcompl\in\LR{1}(\grp)$ and infer by \eqref{ae_asympexpansionthmestofwvele} the estimate
\begin{align*}
\begin{aligned}
\forall \epsilon>0:\quad \snorm{\tI_{12}}&\leq  \norm{\partial_k\fundsolvelcompl}_1\,\norm{\wvel}_{\LR{\infty}\np{\Torus\times\B^R}}^2
\leq \Cc{ae_asympexpansionthm} R^{-\frac{14}{5}+\epsilon}.
\end{aligned}
\end{align*}
Consequently,
\begin{align*}
\begin{aligned}
\forall \epsilon>0:\quad \snorm{I_{1}}&\leq \Cc{ae_asympexpansionthm}\bp{ R^{-\frac{5}{2}}+ R^{-\frac{14}{5}+\epsilon}}.
\end{aligned}
\end{align*}
In a similar manner, we estimate
\begin{align*}
\begin{aligned}
\forall \epsilon>0:\quad \snorm{I_{2}}+\snorm{I_{3}}&\leq \Cc{ae_asympexpansionthm}\bp{ R^{-\frac{5}{2}}+ R^{-\frac{12}{5}+\epsilon}}.
\end{aligned}
\end{align*}
Recalling \eqref{ae_asympexpansionthmestofI4final}, we may thus conclude
\begin{align}\label{ae_asympexpansionthmestofwvelFinal}
\forall\epsilon>0:\quad \snorm{\wvel(t,x)}\leq \Cc{ae_asympexpansionthm}\, \snorm{x}^{-\frac{12}{5}+\epsilon}.
\end{align}
We finally return to the representation formula \eqref{ae_asympexpansionthmRepofvvel} for $\vvel$. Recalling the integrability properties 
\eqref{ae_oseenfundsolprops_SummabilityFundSolGrad} and \eqref{ae_oseenfundsolprops_LocSummabilityFundSolGrad} of 
$\grad\fundsoloseen$, we estimate
\begin{align*}
\begin{aligned}
\snorml{\fundsoloseen*\proj\bb{\nsnonlin{\wvel}}(x)}
&= \snormL{\iper\int_0^\per\int_{\R^3} \fundsoloseen(x-y)\, \Div\bb{\wvel\otimes\wvel}(y,t)\,\dy\dt}\\
&\leq \Cc{ae_asympexpansionthm} \iper\int_0^\per\int_{\R^3} \snorml{\grad\fundsoloseen(x-y)}\, \snorm{\wvel(y,t)}^2\,\dy\dt.
\end{aligned}
\end{align*}
We then use \eqref{ae_fundsoloseengradpdecay} to estimate
\begin{align*}
\begin{aligned}
\iper\int_0^\per\int_{\B_R} \snorml{\grad\fundsoloseen(x-y)}\, \snorm{\wvel(y,t)}^2\,\dy\dt
&\leq \Cc{ae_asympexpansionthm} R^{-\frac{3}{2}}\,\iper\int_0^\per\int_{\B_R} \snorm{\wvel(y,t)}^2\,\dy\dt\\
&\leq \Cc{ae_asympexpansionthm} R^{-\frac{3}{2}},
\end{aligned}
\end{align*}
where we in the last inequality recall that $\wvel\in\LR{2}(\grp)$. Moreover, in view of \eqref{ae_fundsoloseenintgradpdecay}
and \eqref{ae_asympexpansionthmestofwvelFinal}, we see that 
\begin{align*}
\begin{aligned}
&\iper\int_0^\per\int_{\B_{3R,R}} \snorml{\grad\fundsoloseen(x-y)}\, \snorm{\wvel(y,t)}^2\,\dy\dt
\leq \Cc{ae_asympexpansionthm} R^{-\frac{24}{5}+\epsilon}\, \int_{\B_{3R,R}} \snorml{\grad\fundsoloseen(x-y)}\,\dy\\
&\qquad\leq \Cclast{ae_asympexpansionthm} R^{-\frac{24}{5}+\epsilon}\, \int_{\B_{6R}} \snorml{\grad\fundsoloseen(y)}\,\dy
\leq \Cc{ae_asympexpansionthm} R^{-\frac{24}{5}+\epsilon}\, \int_0^{6R}\int_{\partial\B_{r}} \snorml{\grad\fundsoloseen(y)}\,\dS\dr\\
&\qquad\leq \Cc{ae_asympexpansionthm} R^{-\frac{24}{5}+\epsilon}\, \int_0^{6R} r^{-\half}\,\dr
\leq \Cc{ae_asympexpansionthm} R^{-\frac{24}{5}+\half +\epsilon}
\end{aligned}
\end{align*}
for all $\epsilon>0$.
Finally, employing again \eqref{ae_fundsoloseengradpdecay} and the fact that $\wvel\in\LR{2}(\grp)$, we estimate
\begin{align*}
\begin{aligned}
\iper\int_0^\per\int_{\B^{3R}} \snorml{\grad\fundsoloseen(x-y)}\, \snorm{\wvel(y,t)}^2\,\dy\dt
\leq \Cc{ae_asympexpansionthm} R^{-\frac{3}{2}}.
\end{aligned}
\end{align*}
We thus conclude that
\begin{align*}
\begin{aligned}
\snorml{\fundsoloseen*\proj\bb{\nsnonlin{\wvel}}(x)}
\leq \Cc{ae_asympexpansionthm} \snorm{x}^{-\frac{3}{2}}.
\end{aligned}
\end{align*}
The other terms in the representation formula \eqref{ae_asympexpansionthmRepofvvel} for $\vvel$ also appear 
in the analogous representation formula for a solution to the classical, steady-state Navier-Stokes system.  
We can therefore estimate them using well-known methods. More specifically, in view of 
\eqref{ae_asympexpansionthmestofvvela} we can use the arguments from the proof of \cite[Theorem IX.8.1]{galdi:book2} to obtain
\begin{align*}
\begin{aligned}
\forall\epsilon>0:\quad \snorml{\fundsoloseen*\bb{\nsnonlin{\vvel}}(x)}
\leq \Cc{ae_asympexpansionthm} \snorm{x}^{-\frac{3}{2}+\epsilon}
\end{aligned}
\end{align*}
and
\begin{align*}
\begin{aligned}
\snormL{\fundsoloseen*\proj f(x)-\fundsoloseen(x)\cdot\bigg(\int_{\R^3}\proj f\bigg)}
\leq \Cc{ae_asympexpansionthm} \snorm{x}^{-\frac{3}{2}}.
\end{aligned}
\end{align*}
We therefore finally deduce, recalling \eqref{ae_asympexpansionthmRepofvvel} and \eqref{ae_asympexpansionthmestofwvelFinal}, that 
\begin{align*}
\begin{aligned}
\uvel(t,x) &= \vvel(x) + \wvel(t,x)\\
&= \fundsoloseen(x)\cdot\bigg(\int_{\R^3}\proj f\bigg) + \restterm(t,x)
\end{aligned}
\end{align*}
with $\restterm(t,x)$ satisfying \eqref{MainThm_estofrestterm}. 
\end{proof}

%%%%%%%%%%%%%%%%%%%%%%%%%%%%%%%%%%%%%%%%%%%%%%%%%%%%%%%%%%%%%%
%%          Acknowledgment                                  %%
%%%%%%%%%%%%%%%%%%%%%%%%%%%%%%%%%%%%%%%%%%%%%%%%%%%%%%%%%%%%%%

%%%%%%%%%%%%%%%%%%%%%%%%%%%%%%%%%%%%%%%%%%%%%%%%%%%%%%%%%%%%%%
%%          Bibliography                                    %%
%%%%%%%%%%%%%%%%%%%%%%%%%%%%%%%%%%%%%%%%%%%%%%%%%%%%%%%%%%%%%%
\bibliographystyle{abbrv}

\end{document}